\newtheorem{lemma}{Lemma}
\newtheorem{proposition}{Proposition}
\newtheorem{remark}{Remark}
\newtheorem{theorem}{Theorem}
\newtheorem*{corollary}{Corollary}
\title{Discrete diagonal recurrences and\\ discrete minimal submanifolds}
\author{Cristian Ghiu$^1$, Raluca Tulig\u{a}$^2$, \\Constantin Udri\c ste$^2$, Ionel \c Tevy$^2$}
\date{}
\begin{document}

\maketitle

\begin{center}
{\footnotesize
$^1$University Politehnica of Bucharest,
Faculty of Applied Sciences,

Department of Mathematical Methods and Models,
Splaiul Independentei 313,

Bucharest 060042, Romania;
e-mail: crisghiu@yahoo.com

\vspace{0.1 cm}
$^2$University Politehnica of Bucharest,
Faculty of Applied Sciences, Department of Mathematics-Informatics,
Splaiul Independentei 313,
Bucharest 060042, Romania;
e-mails: ralucacoada@yahoo.com; udriste@mathem.pub.ro; vascatevy@yahoo.fr

}
\end{center}

\begin{abstract}
Our original results refer to multivariate recurrences: discrete multitime diagonal recurrence,
bivariate recurrence, trivariate recurrence, solutions tailored to particular situations,
second order multivariate recurrences, characteristic equation, and multivariate diagonal recurrences of superior order.
We find the solutions, we clarify the structural background and provides short,
conceptual proofs. The original results include a new point of view on discrete minimal submanifolds.
\end{abstract}

{\bf AMS Subject Classification (2010)}: 39A06, 65Q99.

{\bf Keywords}: multivariate sequence, multivariate diagonal recurrence, multivariate diagonal linear recurrence,
discrete minimal manifolds, discrete mathematics.

\section{Discrete multitime recurrences}

The theory of multi-variate recurrences is a current effervescent topic in mathematics today.
These recurrences are based on multiple sequences and come from areas like analysis of algorithms,
computational biology, information theory, queueing theory, filters theory, statistical physics etc.

We consider the lattice of points with integer positive coordinates in $\mathbb{R}^n$.
A multi-variate recurrence is a set of rules which transfer a point into another, together
with initial conditions, capable to cover the hole lattice.

A linear multivariate recurrence with polynomial coefficients corresponds
to a linear PDE. In addition, extending the division to
the context of differential operators, the case of recurrences with polynomial coefficients
can be treated in an analogous way.

Bousquet-M\' elou and Petkov\v sek \cite{BP} analyse
the multivariate linear recurrences with constant coefficients
(see also \cite{E}).

Analyzing linear image processing (representations of filters), Roesser \cite{Ro} used
a class of linear dynamical systems in two discrete-time variables,
$$x(i+1,j)= A_1x(i,j)+A_2 y(i,j),\,\, y(i,j+1)= A_3x(i,j)+A_4y(i,j),$$
which can be extended as block matrix system
$$\begin{pmatrix} x_1(t^1+1,t^2,...,t^m) \\ x_2(t^1,t^2+1,...,t^m)\\ ... \,\,...\,\, ...\\x_N(t^1,t^2,...,t^m+1)\end{pmatrix}=
\begin{pmatrix}A_{11}(t)&A_{12}(t)&...&A_{1N}(t)\\ A_{21}(t)&A_{22}(t)&...&A_{2N}(t)\\ ...& ...&...&...\\ A_{N1}(t)& A_{N2}(t)&...&A_{NN}(t) \end{pmatrix}
\begin{pmatrix}x_1(t)\\ x_2(t)\\ ... \,\,... \,\,...\\ x_N(t)\end{pmatrix},$$
where $x(t)= (x_1(t),..., x_N(t))$.

Fornasini and Marchesini \cite{FM}-\cite{FMa} introduced another class of linear dynamical systems in
two discrete-time variables,
$$x(i + 1, j + 1) = Ax(i + 1, j) + Bx(i, j + 1) + Cx(i, j ),$$
which can be extended as
$$x(t+{\bf 1}) = B_0 \,x(t) + \sum_{\alpha=1}^m B_\alpha \,x(t+1_\alpha).$$
This model is used for image processing, representation of discretized
partial differential equations, models of different physical phenomena,
single-carriage way traffic flow, and river pollution.
Many other authors \cite{Ah}, \cite{DM}, \cite{Hu}, \cite{JL}-\cite{MA} develop the theory of filters.
Both of the foregoing extensions are connected to $mD$ filters theory.

Some interesting related works are those of Prepeli\c ta \cite{CP}, \cite{Pr}, where a multiple hybrid
Laplace and $z$ type transformation was introduced to solve multiple differential-difference and
multiple integral equations and to obtain the frequency-domain representations of
multidimensional hybrid control systems.

The visual inspection of (i) bivariate or (ii) trivariate recurrence plots reveals some typical
geometrical structures: (i) single dots, diagonal lines as well as vertical and
horizontal lines (the combination of vertical and horizontal lines plainly forms
rectangular clusters of recurrence points); (ii) all the above and planes.

The papers \cite{C}, \cite{PP} presents algorithms to compute stable discrete minimal surfaces.
We add a new point of view (coming from \cite{U7}, \cite{UAC}) in studying this subject.

In the class of multivariate sequences $x(t)=x(t^1,...,t^m)$ we distinguish:

{\bf Separable multivariate sequences} (those which can be written like a product): $x(t)=x^1(t^1)\cdots x^m(t^m)$.

{\bf Multi-periodic sequences}:
$$x(t^1,...,t^m)= x(t^1+k_1T^1,...,t^m+k_m T^m),\,\, \forall (k_1,...,k_m)\in \mathbb{Z}^m,$$
equivalently to the fact that every point $(0,\ldots,0,t^\beta,0,\ldots,0)$ is a period.

{\bf Diagonal-periodic sequences}:
$$x(t^1,...,t^m)=x(t^1+T^1,...,t^m+T^m),$$
where $T=(T^1,...,T^m)$ is a vector-period.

\section{Discrete multitime diagonal recurrence}

Any element $t=(t^1,\ldots,t^m) \in \mathbb{N}^m$ is called {\it discrete multitime}.
A function of the type $x:\mathbb{N}^m \to \mathbb{R}^n$ is called {\it multivariate sequence}.

Let $F \colon \mathbb{N}^m \times \mathbb{R}^n \to \mathbb{R}^n$ and ${\bf 1}=(1, \ldots , 1) \in  \mathbb{N}^m$.
We shall study {\it multivariate diagonal recurrences} or
({\it discrete multitime diagonal finite difference equations}) of first order
\begin{equation}
\label{recGeneral}
x(t+{\bf 1})= F(t,x(t)),
\end{equation}
where the multivariate vector sequence $x(t)=(x^1(t),\ldots,x^n(t))$
is solution of the system ({\rm \ref{recGeneral}}).

This model of diagonal recurrence can be justified by the fact that to a first order PDE system
$$D_{\bf 1}x(t)= f(t,x(t)), \, t \in \mathbb{R}^m$$
we can associate a discretized equation of the form
$$x(t+{\bf 1})-x(t) = f(t,x(t)),\, t\in \mathbb{Z}^m.$$
The initial (Cauchy) conditions on a curve, for the PDE system, are translated
into initial conditions for the diagonal recurrence.

\begin{remark} Let us consider a two-variate recurrence with the unknown sequence $x(m,n)$.
A very frequent and interesting case happens when the difference $m-n$, between
the arguments of the unknown sequence, is constant among all its occurrences in the
multivariate recurrence relation. For instance, this happens for any diagonal recurrence
$x(m,n) = f(x(m-1,n-1))$, where the difference between the first and second
argument of $x(m,n)$ is always $m-n$. Such a recurrence can be rewritten as a univariate recurrence
$y(t) = f(y(t-1))$, where $y(t-k) = x(m-k,n-k), \forall \gamma \in \mathbb{N}$.
Another interesting case, similar to the one above, is when the sum $m+n$ of the arguments of
the unknown sequence is constant. For instance, multivariate recurrences
of the form $x(m,n) = f(x(m+1,n-1))$ can be rewritten as univariate recurrences $y(t) = f(y(t-1))$,
where $y(t-k) = x(m+k,n-k), \forall k \in \mathbb{N}$.
\end{remark}

\subsection{Linear discrete single-time recurrence}

Let us recall a well-known result regarding a single-time linear recurrence equation, i.e., $m=1$,
in an original version that can be extended to the multi-temporal case.
\begin{proposition}
\label{rec.p1}
Let $A \colon \mathbb{N} \to \mathcal{M}_{n}(\mathbb{R})$,
$b \colon \mathbb{N} \to \mathbb{R}^n$ and $x_0 \in \mathbb{R}^n$.
Then the unique sequence $x \colon \mathbb{N} \to \mathbb{R}^n$
which verifies the first order linear single-time recurrence equation
\begin{equation}
\label{ecp1.1}
\begin{split}
x(t+1)
&=
A(t)x(t)+b(t),
\quad \forall t \in \mathbb{N},\\
x(0)
&=x_0
\end{split}
\end{equation}
is
\begin{equation}
\label{ecp1.2}
\begin{split}
&x(t)=
A(t-1)A(t-2)\cdot \ldots \cdot A(1)A(0)x_0+\\
+&b(t-1)+
\sum_{k=0}^{t-2}
A(t-1)A(t-2)\cdot \ldots \cdot A(k+1)b(k),
\quad \forall t \geq 2,\\
&x(1)
=A(0)x_0+b(0).
\end{split}
\end{equation}
\end{proposition}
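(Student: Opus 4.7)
The plan is a direct induction on $t \in \mathbb{N}$, exploiting the fact that the recurrence $x(t+1)=A(t)x(t)+b(t)$ with prescribed initial value $x(0)=x_0$ uniquely determines every subsequent term; hence uniqueness is automatic and only existence (i.e.\ matching of the closed-form expression) requires work.

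First I would settle the base cases by direct computation. For $t=1$ the formula $x(1)=A(0)x_0+b(0)$ is nothing but the recurrence applied once. For $t=2$ I would compute $x(2)=A(1)x(1)+b(1)=A(1)A(0)x_0+A(1)b(0)+b(1)$ and check that this agrees with the second line of (\ref{ecp1.2}) evaluated at $t=2$, where the sum $\sum_{k=0}^{t-2}$ contains the single term $k=0$ giving $A(1)b(0)$, and the isolated term $b(t-1)$ gives $b(1)$. This double check is important because the formula in (\ref{ecp1.2}) is written in a slightly asymmetric way (an isolated $b(t-1)$ plus a sum), and one wants to confirm the indexing conventions before iterating.

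For the inductive step, assume (\ref{ecp1.2}) holds at some $t\geq 2$ and substitute it into $x(t+1)=A(t)x(t)+b(t)$. The homogeneous part produces $A(t)A(t-1)\cdots A(0)x_0$, matching the leading term of the formula at level $t+1$. For the inhomogeneous part, $A(t)$ multiplies $b(t-1)$ and each summand $A(t-1)\cdots A(k+1)b(k)$, so I obtain $A(t)b(t-1)+\sum_{k=0}^{t-2}A(t)A(t-1)\cdots A(k+1)b(k)$, and after adding the external $b(t)$ I re-index by absorbing $k=t-1$ into the sum (this is where $A(t)b(t-1)$ fits) and isolating the new $b(t)$ as the outer term. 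The result is exactly (\ref{ecp1.2}) with $t$ replaced by $t+1$.

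The only real obstacle is bookkeeping: making sure the product ranges $A(t-1)A(t-2)\cdots A(k+1)$ shift consistently under $t\mapsto t+1$ and that the boundary cases (the empty product at the top of the sum and the lowest index $k=0$) are handled correctly. Uniqueness then requires a separate line: if $y$ also satisfies (\ref{ecp1.1}), a trivial induction on $t$ using $y(0)=x_0=x(0)$ and the deterministic recurrence yields $y(t)=x(t)$ for all $t\in\mathbb{N}$.
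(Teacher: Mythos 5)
Your proposal is correct and follows essentially the same route as the paper: induction on $t\geq 1$ with explicit verification at $t=1$ and $t=2$, followed by substitution of the inductive hypothesis into the recurrence and re-indexing of the sum to absorb the term $A(t)b(t-1)$. The only (harmless) addition is your explicit remark on uniqueness, which the paper leaves implicit.
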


\begin{proof}
Mathematical induction after $t \geq 1$.
For $t=1$, the result is verified automatically,
using the first relation in ({\rm \ref{ecp1.1}}),
for $t=0$, as well as $x(0)=x_0$.
For $t=2$, $x(2)=A(1)x(1)+b(1)=A(1)A(0)x_0+A(1)b(0)+b(1)$.

It remains to solve the inductive step:
prove that, if the statement holds for some natural number $t\geq 2$,
then the statement holds for $t + 1$;
indeed,
\begin{equation*}
\begin{split}
&\quad x(t+1)
=
A(t)x(t)+b(t)\\
=&
A(t)A(t-1)A(t-2)\cdot \ldots \cdot A(1)A(0)x_0+\\
+&A(t)b(t-1)+
\sum_{k=0}^{t-2}
A(t)A(t-1)A(t-2)\cdot \ldots \cdot A(k+1)b(k)
+b(t)\\
=&
A(t)A(t-1)\cdot \ldots \cdot A(1)A(0)x_0
+b(t)+
\sum_{k=0}^{t-1}
A(t)A(t-1)\cdot \ldots \cdot A(k+1)b(k).
\end{split}
\end{equation*}
\end{proof}

\begin{corollary}
Let $A \in \mathcal{M}_{n}(\mathbb{R})$
be a constant matrix, and $x_0 \in \mathbb{R}^n$.
Then the unique sequence $x \colon \mathbb{N} \to \mathbb{R}^n$ which verifies
\begin{equation}
\label{ecCor1.1}
\begin{split}
x(t+1)
&=
Ax(t)+b(t),
\quad \forall t \in \mathbb{N},\\
x(0)
&=x_0,
\end{split}
\end{equation}
is
\begin{equation}
\label{ecCor1.2}
x(t)=
A^tx_0+
\sum_{k=0}^{t-1}
A^{t-1-k}\,b(k),
\quad \forall t \geq 1.
\end{equation}
\end{corollary}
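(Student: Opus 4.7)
The plan is to deduce the corollary as an immediate specialization of Proposition \ref{rec.p1}, since the hypothesis $A(t)=A$ for every $t\in\mathbb{N}$ clearly fits the setting of that proposition. Thus existence and uniqueness of the solution to (\ref{ecCor1.1}) are already guaranteed, and only the explicit formula (\ref{ecCor1.2}) remains to be checked.

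First I would handle the base case $t=1$ separately, observing that the last line of (\ref{ecp1.2}) gives $x(1)=A(0)x_0+b(0)=Ax_0+b(0)$, which coincides with the right-hand side of (\ref{ecCor1.2}) evaluated at $t=1$, namely $A x_0 + A^{0}b(0)$. For $t\ge 2$ I would substitute $A(j)=A$ into the main line of (\ref{ecp1.2}): the leading product collapses to $A(t-1)\cdots A(0)=A^{t}$, while each factor of the summation collapses to $A(t-1)\cdots A(k+1)=A^{t-1-k}$.

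The only small bookkeeping step — which I expect to be the mildest of obstacles — is to fold the isolated term $b(t-1)$ into the summation. Writing $b(t-1)=A^{0}b(t-1)=A^{t-1-(t-1)}b(t-1)$ allows us to reindex and extend the sum from $k=0,\ldots,t-2$ to $k=0,\ldots,t-1$, yielding
\begin{equation*}
x(t)=A^{t}x_{0}+\sum_{k=0}^{t-1}A^{t-1-k}b(k),
\end{equation*}
which is exactly (\ref{ecCor1.2}). Since the $t=1$ case matches the same closed form, the corollary holds for all $t\ge 1$.

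There is no genuine difficulty here; the corollary is a direct rewrite of Proposition \ref{rec.p1} in the commutative setting of a single constant matrix, and the only care required is the cosmetic reindexing that absorbs the stray $b(t-1)$ term into the sum.
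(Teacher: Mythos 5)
Your proposal is correct and is exactly the specialization the paper intends: the corollary is stated without proof as an immediate consequence of Proposition \ref{rec.p1}, and your substitution $A(j)=A$ with the reindexing that absorbs $b(t-1)$ as the $k=t-1$ term is the routine verification the authors leave to the reader.
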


\subsection{Linear discrete multitime diagonal recurrence}

The diagonal discretization of PDEs incorporate points which lie on diagonals of the grid.
This is a good enough reason for introducing and analyzing the diagonal recurrences.

Let $m\geq 2$,
$A \colon \mathbb{N}^m \to \mathcal{M}_{n}(\mathbb{R})$,
$b \colon \mathbb{N}^m \to \mathbb{R}^n$.
In this Subsection we refer to a
{\it linear discrete multitime diagonal recurrence equation of first order}
\begin{equation}
\label{eeec}
x(t+{\bf 1})=A(t)x(t)+b(t),
\quad t \in \mathbb{N}^m,
\end{equation}
with $x \colon \mathbb{N}^m \to \mathbb{R}^n=\mathcal{M}_{n,1}(\mathbb{R})$.

For convenience, we denote $\mu(t)=\min \{t^1,t^2,\ldots, t^m\}$.

\begin{lemma}
\label{rec.lem1}
Let $D= \Big\{\, t=(t^1,t^2, \dots , t^m ) \in \mathbb{N}^m\,  \Big|\,
\mu(t)=t^m \, \Big\}$, $m\geq 2$, and \\
$A \colon D \to \mathcal{M}_{n}(\mathbb{R})$,
$b \colon D \to \mathbb{R}^n$,
$f \colon \mathbb{N}^{m-1} \to \mathbb{R}^n$.
Then the unique multivariate sequence
$x \colon D \to \mathbb{R}^n$ which verifies the first order linear
discrete multitime diagonal recurrence equation
\begin{equation}
\label{ecl1.1}
\begin{split}
x(t+{\bf 1})
&=
A(t)x(t)+b(t),
\quad \forall t \in D,\\
x(t^1, \dots , t^{m-1},& 0)
=f(t^1, \dots , t^{m-1}),
\quad \forall (t^1, \dots , t^{m-1}) \in \mathbb{N}^{m-1}
\end{split}
\end{equation}
is
\begin{equation}
\label{ecl1.2}
\begin{split}
x(t)=
A(t-{\bf 1})A(t-2\cdot {\bf 1})\cdot \ldots
\cdot &A(t-t^m \cdot  {\bf 1})f(t^1-t^m, \dots , t^{m-1}-t^m)\\
+b(t-{\bf 1})+
\sum_{k=2}^{t^m}
A(t-{\bf 1})A(t-2\cdot & {\bf 1})\cdot
\ldots \cdot A(t-(k-1)\cdot {\bf 1})b(t-k\cdot {\bf 1}),\\
\forall t\in D,\,\,\mbox{ with }\, t^m \geq 2,
\end{split}
\end{equation}

\begin{equation}
\label{ecl1.3}
\begin{split}
&x(t^1, \dots , t^{m-1}, 1)
=
A(t^1-1, \dots , t^{m-1}-1, 0)
f(t^1-1, \dots , t^{m-1}-1)\\
&+b(t^1-1, \dots , t^{m-1}-1, 0),
\quad \forall (t^1, \dots , t^{m-1}) \in (\mathbb{N^*})^{m-1}.
\end{split}
\end{equation}
\end{lemma}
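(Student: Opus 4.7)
The domain $D$ and the purely diagonal shift $\mathbf{1}$ in (\ref{ecl1.1}) together let me collapse the multivariate problem to a univariate one along each diagonal ray. For any fixed $(s^1,\dots,s^{m-1})\in\mathbb{N}^{m-1}$, the ray $(s^1+k,\dots,s^{m-1}+k,k)$, $k\in\mathbb{N}$, starts on the initial hyperplane $\{t^m=0\}$ and lies entirely in $D$ (its $m$-th coordinate $k$ is its minimum). I would set $y(k):=x(s^1+k,\dots,s^{m-1}+k,k)$ and reduce the problem to the single-time case already treated in Proposition \ref{rec.p1}.

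First I would check that $y$ satisfies the univariate linear recurrence $y(k+1)=\widetilde A(k)\,y(k)+\widetilde b(k)$ with $\widetilde A(k):=A(s^1+k,\dots,s^{m-1}+k,k)$ and $\widetilde b(k):=b(s^1+k,\dots,s^{m-1}+k,k)$, together with $y(0)=f(s^1,\dots,s^{m-1})$. This is immediate from (\ref{ecl1.1}) evaluated at the point $(s^1+k,\dots,s^{m-1}+k,k)\in D$. Proposition \ref{rec.p1} then produces a closed expression for $y(k)$ in terms of $\widetilde A$, $\widetilde b$ and $y(0)$.

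For $t\in D$ with $t^m\geq 2$ I would specialise $s^j:=t^j-t^m$ so that $y(t^m)=x(t)$, and then reindex via the substitution $j:=t^m-k$: under this change $\widetilde A(t^m-j)=A(t-j\cdot\mathbf{1})$ and $\widetilde b(t^m-j)=b(t-j\cdot\mathbf{1})$, which transforms the formula from Proposition \ref{rec.p1} into exactly (\ref{ecl1.2}). The boundary formula (\ref{ecl1.3}) falls out of the special case $y(1)=\widetilde A(0)y(0)+\widetilde b(0)$ with $t^m=1$. Uniqueness is automatic: every $t\in D$ lies on a \emph{unique} ray of the above type (namely $s^j=t^j-t^m$), and on that ray the univariate recurrence determines $y$ uniquely from $y(0)=f(s^1,\dots,s^{m-1})$.

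The only real obstacle is the bookkeeping in the reindexing, in particular keeping the non-commutative matrix product in the order $A(t-\mathbf{1})\,A(t-2\cdot\mathbf{1})\,\cdots\,A(t-(k-1)\cdot\mathbf{1})$ as $k$ runs from $2$ to $t^m$, and making sure that the range of summation transforms correctly under $j\leftrightarrow k$. Apart from this mechanical transcription, no further argument beyond Proposition \ref{rec.p1} is needed.
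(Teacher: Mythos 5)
Your proposal is correct and follows essentially the same route as the paper: both reduce the problem to the single-time Proposition \ref{rec.p1} along the diagonal rays that foliate $D$, the only (immaterial) difference being that you anchor each ray at its base point on the hyperplane $t^m=0$ while the paper anchors it at the target point $t$ and parametrizes backwards. The reindexing $j=t^m-k$ you describe is exactly the bookkeeping carried out in the paper's proof.
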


\begin{proof}
Let us remark that for any
$t=(t^1,t^2,\ldots,t^m)\in D$ and any $s \in \mathbb{N}$, it follows
$t-(t^m-s)\cdot {\bf 1}=
(t^1-t^m+s, \ldots, t^{m-1}-t^m+s, s)\in D$.
Hence, for a fixed $t \in D$, we can define the sequences
$$
\widetilde{A} \colon \mathbb{N} \to \mathcal{M}_{n}(\mathbb{R}),
\quad
\widetilde{b} \colon \mathbb{N} \to \mathbb{R}^n,
\quad
\widetilde{x} \colon \mathbb{N} \to \mathbb{R}^n,
$$
\begin{equation}
\label{ecl1.4}
\widetilde{A}(s)=A(t-(t^m-s)\cdot {\bf 1})
=
A(t^1-t^m+s, \ldots, t^{m-1}-t^m+s, s),
\end{equation}
\begin{equation}
\label{ecl1.5}
\widetilde{b}(s)=b(t-(t^m-s)\cdot {\bf 1})
=
b(t^1-t^m+s, \ldots, t^{m-1}-t^m+s, s),
\end{equation}
\begin{equation}
\label{ecl1.6}
\widetilde{x}(s)=x(t-(t^m-s)\cdot {\bf 1})
=
x(t^1-t^m+s, \ldots, t^{m-1}-t^m+s, s).
\end{equation}

The sequence $\widetilde{x}$ verifies
\begin{equation}
\label{ecl1.7}
\begin{split}
\widetilde{x}(s+1)
&=
\widetilde{A}(s)\widetilde{x}(s)+\widetilde{b}(s),
\quad \forall s \in \mathbb{N},\\
\widetilde{x}(0)
&=f(t^1-t^m, \dots , t^{m-1}-t^m),
\end{split}
\end{equation}

Now we can apply the Proposition \ref{rec.p1}.
For any $s\geq 2$, we have
\begin{equation*}
\begin{split}
\widetilde{x}(s)=
\widetilde{A}(s-1)\widetilde{A}(s-2)\cdot
\ldots \cdot \widetilde{A}(0)
\widetilde{x}(0)+\\
+\widetilde{b}(s-1)+
\sum_{k=0}^{s-2}
\widetilde{A}(s-1)\widetilde{A}(s-2)\cdot
\ldots \cdot \widetilde{A}(k+1)\widetilde{b}(k)
\end{split}
\end{equation*}
i.e.,
\begin{equation*}
x(t-(t^m-s)\cdot {\bf 1})
=
A(t-(t^m-s+1)\cdot {\bf 1})A(t-(t^m-s+2)\cdot {\bf 1})\cdot
\ldots \cdot A(t-t^m\cdot {\bf 1})
\widetilde{x}(0)+
\end{equation*}
\begin{equation*}
+b(t-(t^m-s+1)\cdot {\bf 1})
\end{equation*}
\begin{equation*}
+
\sum_{k=0}^{s-2}
A(t-(t^m-s+1)\cdot {\bf 1})A(t-(t^m-s+2)\cdot {\bf 1})\cdot
\ldots \cdot
A(t-(t^m-k-1)\cdot {\bf 1})
b(t-(t^m-k)\cdot {\bf 1})
\end{equation*}
For $t^m\geq 2$, we set $s=t^m$ in the foregoing relation. Then
\begin{equation*}
x(t)
=
A(t- {\bf 1})A(t-2\cdot {\bf 1})\cdot
\ldots \cdot A(t-t^m\cdot {\bf 1})
\widetilde{x}(0)
+b(t- {\bf 1})
\end{equation*}
\begin{equation*}
+
\sum_{k=0}^{t^m-2}
A(t- {\bf 1})A(t-2\cdot {\bf 1})\cdot
\ldots \cdot
A(t-(t^m-k-1)\cdot {\bf 1})
b(t-(t^m-k)\cdot {\bf 1})
\end{equation*}
\begin{equation*}
=
A(t- {\bf 1})A(t-2\cdot {\bf 1})\cdot
\ldots \cdot A(t-t^m\cdot {\bf 1})
f(t^1-t^m, \dots , t^{m-1}-t^m)
+b(t- {\bf 1})
\end{equation*}
\begin{equation*}
+
\sum_{k=2}^{t^m}
A(t- {\bf 1})A(t-2\cdot {\bf 1})\cdot
\ldots \cdot
A(t-(t^m-k+1)\cdot {\bf 1})
b(t-(t^m-k+2)\cdot {\bf 1})
\end{equation*}
\begin{equation*}
=
A(t- {\bf 1})A(t-2\cdot {\bf 1})\cdot
\ldots \cdot A(t-t^m\cdot {\bf 1})
f(t^1-t^m, \dots , t^{m-1}-t^m)
\end{equation*}
\begin{equation*}
+b(t- {\bf 1})
+
\sum_{k=2}^{t^m}
A(t- {\bf 1})A(t-2\cdot {\bf 1})\cdot
\ldots \cdot
A(t-(k-1)\cdot {\bf 1})
b(t-k\cdot {\bf 1})
\end{equation*}

For $t^m=1$, we obtain the formula
(\ref{ecl1.3}):
\begin{equation*}
x(t^1, \dots , t^{m-1}, 1)
=
x((t^1-1, \dots , t^{m-1}-1, 0)+{\bf 1})
=
\end{equation*}
\begin{equation*}
=
A(t^1-1, \dots , t^{m-1}-1, 0)
x(t^1-1, \dots , t^{m-1}-1, 0)
+b(t^1-1, \dots , t^{m-1}-1, 0)
\end{equation*}
\begin{equation*}
=
A(t^1-1, \dots , t^{m-1}-1, 0)
f(t^1-1, \dots , t^{m-1}-1)
+b(t^1-1, \dots , t^{m-1}-1, 0).
\end{equation*}
\end{proof}

\begin{remark}
\label{rec.obs1}
In the conditions of Lemma {\rm \ref{rec.lem1}},
if moreover the function $A(\cdot)$ is constant,
i.e., $A(t)=A$, $\forall t$, then
the formulas {\rm (\ref{ecl1.2})}
and {\rm (\ref{ecl1.3})} become
\begin{equation}
\label{ecObs1.1}
x(t)=
A^{t^m}f(t^1-t^m, \dots , t^{m-1}-t^m)
+
\sum_{k=1}^{t^m}
A^{k-1}b(t-k\cdot {\bf 1}),
\end{equation}
\begin{equation*}
\forall t\in D,\,\,\mbox{ with }\, t^m \geq 1.
\end{equation*}
\end{remark}

Notation: for every $\beta \in \{1, 2, \ldots, m\}$, we denote
$$(t^1,\ldots, \widehat{t^\beta}, \ldots, t^m):=
(t^1,\ldots, t^{\beta-1},t^{\beta+1} \ldots, t^m)\in \mathbb{N}^{m-1}.$$

\begin{theorem}
\label{rec.t1}
Let $m\geq 2$,
$A \colon \mathbb{N}^m \to \mathcal{M}_{n}(\mathbb{R})$,
$b \colon \mathbb{N}^m \to \mathbb{R}^n$. We consider the $(m-1)$-sequences
$f_1, f_2, \ldots, f_m \colon \mathbb{N}^{m-1} \to \mathbb{R}^n$,
such that
\begin{equation}
\label{ect1.1}
\begin{split}
f_{\alpha}(t^1,\ldots, \widehat{t^\alpha}, \ldots, t^m)\Big|_{t^\beta=0}
&=
f_{\beta}(t^1,\ldots, \widehat{t^\beta}, \ldots, t^m)\Big|_{t^\alpha=0},\\
\forall
t^1, \ldots, t^{\alpha-1}, t^{\alpha+1}, &\ldots, t^{\beta-1}, t^{\beta+1}, \ldots, t^m
\in \mathbb{N},
\end{split}
\end{equation}
for any $\alpha,\beta \in \{1, 2, \ldots, m\}$.
Then the unique $m$-sequence
$x \colon \mathbb{N}^m \to \mathbb{R}^n$ which verifies
\begin{equation}
\label{ect1.2}
\left\{
\begin{split}
x(t+{\bf 1})
&=
A(t)x(t)+b(t),
\quad \forall t \in \mathbb{N}^m,\\
x(t)\Big|_{t^\beta=0}
&=
f_{\beta}(t^1,\ldots, \widehat{t^\beta}, \ldots, t^m),
\quad
\forall
(t^1,\ldots, \widehat{t^\beta}, \ldots, t^m)\in \mathbb{N}^{m-1},\\
\forall
\beta &\in \{1,2, \ldots, m\},
\end{split}
\right.
\end{equation}
is defined either by the formula
\begin{equation}
\label{ect1.3}
\begin{split}
&x(t)=
A(t-{\bf 1})A(t-2\cdot {\bf 1})\cdot \ldots
\cdot A(t-t^\beta \cdot  {\bf 1})\cdot\\
& \qquad \cdot
f_{\beta}(t^1-t^\beta, \ldots, \widehat{t^\beta}, \ldots, t^{m-1}-t^\beta)
+b(t-{\bf 1})\\
&+
\sum_{k=2}^{t^\beta}
A(t-{\bf 1})A(t-2\cdot  {\bf 1})\cdot
\ldots \cdot A(t-(k-1)\cdot {\bf 1})b(t-k\cdot {\bf 1}),\\
&\qquad \mbox{ if }\,\,
\mu(t)=t^\beta \geq 2,
\end{split}
\end{equation}
or by the formula
\begin{equation}
\label{ect1.4}
\begin{split}
x(t)
&=
A(t^1-1, \dots ,t^{\beta-1}-1,0,t^{\beta+1}-1, \ldots t^m-1)
\cdot
\\
& \,\, \cdot
f_{\beta}(t^1-1, \ldots, \widehat{t^\beta}, \ldots, t^{m-1}-1)\\
&+b(t^1-1, \dots ,t^{\beta-1}-1,0,t^{\beta+1}-1, \ldots t^m-1),\\
&\quad \quad  \mbox{ if }\,\,
\mu(t)=t^\beta=1.
\end{split}
\end{equation}
\end{theorem}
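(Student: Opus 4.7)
The plan is to reduce this theorem to Lemma~\ref{rec.lem1} by decomposing $\mathbb{N}^m$ into the $m$ regions $D_\beta = \{t \in \mathbb{N}^m \mid \mu(t) = t^\beta\}$ and applying the lemma on each of them after a coordinate permutation. The compatibility condition (\ref{ect1.1}) is precisely what is needed to glue the resulting formulas on the overlaps $D_\alpha \cap D_\beta$.

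First I would dispatch \emph{uniqueness} by a backward induction on $\mu(t)$: if $\mu(t) \geq 1$, the recurrence rewrites $x(t)$ in terms of $x(t-{\bf 1})$ (note $\mu(t-{\bf 1}) = \mu(t)-1$), and iterating $\mu(t)$ steps reaches a point on one of the hyperplanes $\{t^\beta=0\}$, where $x$ is prescribed by some $f_\beta$; conversely, on the hyperplanes themselves the boundary data fix $x$. Hence at most one solution can exist.

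For \emph{existence}, fix $\beta \in \{1,\dots,m\}$ and apply Lemma~\ref{rec.lem1} after swapping the roles of the $\beta$-th and $m$-th coordinates: the region $D_\beta$ plays the role of $D$ in the lemma, and the boundary data on $\{t^\beta = 0\}$ is $f_\beta$. This yields precisely formula (\ref{ect1.3}) for $t\in D_\beta$ with $t^\beta \geq 2$, and formula (\ref{ect1.4}) for $t^\beta = 1$. Since the diagonal shift ${\bf 1}$ preserves $D_\beta$ (the argmin is unchanged under $t\mapsto t+{\bf 1}$), the recurrence holds inside each $D_\beta$ automatically, and the boundary condition $x(t)|_{t^\beta=0}=f_\beta$ is built into the lemma.

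The crux is the consistency of the $m$ formulas on the overlap $D_\alpha\cap D_\beta$, where $t^\alpha = t^\beta = \mu(t) =: \mu$. Here both (\ref{ect1.3})-with-index-$\alpha$ and (\ref{ect1.3})-with-index-$\beta$ must return the same value for $x(t)$. The two expressions have identical chains of matrix products $A(t-{\bf 1})\cdots A(t-\mu\cdot{\bf 1})$ and identical $b$-sum tails; they differ only in the boundary term, which reads
\begin{equation*}
f_\alpha(t^1-\mu,\ldots,\widehat{t^\alpha},\ldots,t^m-\mu)
\quad\text{versus}\quad
f_\beta(t^1-\mu,\ldots,\widehat{t^\beta},\ldots,t^m-\mu).
\end{equation*}
Since $t^\alpha - \mu = t^\beta - \mu = 0$, evaluating $f_\alpha$ there means setting its $\beta$-slot to $0$, and evaluating $f_\beta$ there means setting its $\alpha$-slot to $0$; the compatibility hypothesis (\ref{ect1.1}) is exactly the statement that these two values coincide. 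This shows the piecewise definition is well-posed on $\mathbb{N}^m$.

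I expect the only delicate step to be the bookkeeping of the overlap consistency, particularly the low-dimensional corner cases (several coordinates simultaneously minimal, some equal to $0$ or $1$), where one has to check that formula (\ref{ect1.4}) matches the $\mu(t)\geq 2$ formula (\ref{ect1.3}) under the iterated compatibility relations. The rest is a mechanical transcription of Lemma~\ref{rec.lem1} via coordinate relabelling.
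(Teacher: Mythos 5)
Your proposal is correct and follows essentially the same route as the paper: reduce to Lemma~\ref{rec.lem1} by relabelling $t^\beta$ as the last coordinate on each region where $\mu(t)=t^\beta$, and use the compatibility conditions (\ref{ect1.1}) to check that the formulas agree when two coordinates are simultaneously minimal. Your treatment of uniqueness and of the overlap consistency is somewhat more explicit than the paper's, but the underlying argument is identical.
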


\begin{proof}
Let us remark that the multivariate sequence $x$ is well defined,
i.e., if $t^\alpha=t^\beta=\mu(t)$, then the expressions
which define $x$ of the formulas (\ref{ect1.3})
(or (\ref{ect1.4})), corresponding to $\alpha$ and $\beta$,
coincide due to the equalities
(\ref{ect1.1}).

If $t^\beta=\mu(t)$,
the conclusion follows applying directly the Lemma \ref{rec.lem1},
having $t^\beta$ instead of $t^m$.

Conversely, one observes immediately that if the multivariate sequence $x$
is defined by the formulas (\ref{ect1.3}), (\ref{ect1.4}),
together with the second formula of (\ref{ect1.2}) (for $t^\beta=0$), then the multivariate sequence
$x$ verifies also the first relation in (\ref{ect1.2}), i.e.,
$$
x(t+{\bf 1})
=
A(t)x(t)+b(t),
\quad \forall t \in \mathbb{N}^m.
$$
\end{proof}

\begin{remark}
\label{rec.obs2}
Let $m\geq 2$,
$A \colon \mathbb{N}^m \to \mathcal{M}_{n}(\mathbb{R})$,
$b \colon \mathbb{N}^m \to \mathbb{R}^n$
and the $(m-1)$-sequences
$f_1, f_2, \ldots, f_m \colon \mathbb{N}^{m-1} \to \mathbb{R}^n$.
If the $m$-sequence
$x \colon \mathbb{N}^m \to \mathbb{R}^n$ verifies the relations
\begin{equation}
\label{ecObs2.1}
\begin{split}
x(t)\Big|_{t^\beta=0}
&=
f_{\beta}(t^1,\ldots, \widehat{t^\beta}, \ldots, t^m),
\quad
\forall
(t^1,\ldots, \widehat{t^\beta}, \ldots, t^m)\in \mathbb{N}^{m-1},\\
\forall \beta&\in \{1,2, \ldots, m\},
\end{split}
\end{equation}
then,
\begin{equation}
\label{ecObs2.2}
\begin{split}
f_{\alpha}(t^1,\ldots, \widehat{t^\alpha}, \ldots, t^m)\Big|_{t^\beta=0}
=
f_{\beta}(t^1,\ldots, \widehat{t^\beta}, \ldots, t^m)\Big|_{t^\alpha=0},\\
\forall
t^1, \ldots, t^{\alpha-1}, t^{\alpha+1}, \ldots, t^{\beta-1}, t^{\beta+1}, \ldots, t^m
\in \mathbb{N}.
\end{split}
\end{equation}
\end{remark}

This follows immediately since,
$\displaystyle
x(t)\Big|_{t^\beta=0, t^\alpha=0}
=x(t)\Big|_{t^\alpha=0, t^\beta=0}$.

\begin{corollary}
In the conditions in Theorem {\rm \ref{rec.t1}},
if moreover, the function $A(\cdot)$ is constant,
i.e., $A(t)=A$, $\forall t$, then
the formulas {\rm (\ref{ect1.3})}
and {\rm (\ref{ect1.4})} become
\begin{equation}
\label{consecT1}
\begin{split}
&x(t)=
A^{t^\beta}f_{\beta}(t^1-t^\beta, \ldots, \widehat{t^\beta}, \ldots, t^{m-1}-t^\beta)
+
\sum_{k=1}^{t^\beta}
A^{k-1}b(t-k\cdot {\bf 1}),\\
&\mbox{ if }\,\,
\mu(t)=t^\beta \geq 1.
\end{split}
\end{equation}
\end{corollary}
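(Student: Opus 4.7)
The plan is to derive the corollary as an immediate specialization of Theorem \ref{rec.t1} under the additional hypothesis $A(t)=A$ for all $t$. Since the theorem produces two separate expressions, one for $\mu(t)=t^\beta\geq 2$ (formula (\ref{ect1.3})) and one for $\mu(t)=t^\beta=1$ (formula (\ref{ect1.4})), I would treat these two ranges in turn and check that both collapse into the single expression (\ref{consecT1}).

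First, suppose $\mu(t)=t^\beta\geq 2$. The factor $A(t-\mathbf{1})A(t-2\cdot\mathbf{1})\cdots A(t-t^\beta\cdot\mathbf{1})$ multiplying $f_\beta$ contains exactly $t^\beta$ copies of the constant matrix $A$, hence equals $A^{t^\beta}$. Likewise, inside the sum over $k$, the product $A(t-\mathbf{1})A(t-2\cdot\mathbf{1})\cdots A(t-(k-1)\cdot\mathbf{1})$ contracts to $A^{k-1}$. Substituting into (\ref{ect1.3}) yields
\[
x(t)=A^{t^\beta}f_\beta\bigl(t^1-t^\beta,\ldots,\widehat{t^\beta},\ldots,t^{m-1}-t^\beta\bigr)+b(t-\mathbf{1})+\sum_{k=2}^{t^\beta}A^{k-1}\,b(t-k\cdot\mathbf{1}).
\]
Recognizing $b(t-\mathbf{1})=A^{0}\,b(t-\mathbf{1})$ as exactly the missing $k=1$ summand, I absorb it into the sum, obtaining $\sum_{k=1}^{t^\beta}A^{k-1}b(t-k\cdot\mathbf{1})$, which is (\ref{consecT1}).

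For the boundary case $\mu(t)=t^\beta=1$, setting $A(\cdot)=A$ in (\ref{ect1.4}) gives $x(t)=A\,f_\beta(t^1-1,\ldots,\widehat{t^\beta},\ldots,t^{m-1}-1)+b(t-\mathbf{1})$, which matches (\ref{consecT1}) evaluated at $t^\beta=1$, namely $A^{1}f_\beta(\ldots)+A^{0}b(t-\mathbf{1})$. Thus both cases are unified into the single closed form claimed.

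No step presents a genuine obstacle; the only mild subtlety is the reindexing that fuses the isolated $b(t-\mathbf{1})$ term into the summation, which hinges on the convention $A^{0}=I$. As an alternative route, one could invoke Remark \ref{rec.obs1}, which already records the constant-coefficient version of Lemma \ref{rec.lem1} along the coordinate $\beta=m$, and then use the observation from the proof of Theorem \ref{rec.t1} that compatibility (\ref{ect1.1}) guarantees the right-hand side of (\ref{consecT1}) is independent of which index $\beta$ realizing $\mu(t)$ is chosen.
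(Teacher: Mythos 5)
Your proposal is correct and is exactly the specialization the paper intends: the paper states this corollary without proof, and the evident derivation is precisely your substitution of the constant $A$ into (\ref{ect1.3}) and (\ref{ect1.4}), the count of $t^\beta$ (respectively $k-1$) factors in each product, and the absorption of $b(t-{\bf 1})$ as the $k=1$ term of the sum, mirroring what Remark \ref{rec.obs1} does for Lemma \ref{rec.lem1}. Both cases check out, including the boundary case $t^\beta=1$, so nothing is missing.
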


Identifying the initial conditions with the constant diagonal recurrence, we obtain new information
about the discrete diagonal flow.

\begin{proposition}
\label{rec.p2}
Let $m\geq2$ and $A \colon \mathbb{N}^m \to \mathcal{M}_{n}(\mathbb{R})$.
Denote
\begin{equation*}
S=
\Big\{ y \colon \mathbb{N}^m \to \mathbb{R}^n \, \Big|\,
y(t+{\bf 1})=y(t), \forall t \in \mathbb{N}^m\, \Big\},
\end{equation*}
\begin{equation*}
V=
\Big\{ x \colon \mathbb{N}^m \to \mathbb{R}^n \, \Big|\,
x(t+{\bf 1})=A(t)x(t), \forall t \in \mathbb{N}^m\, \Big\}
\end{equation*}
and introduce the function
$$\psi \colon S \to V,$$
\begin{equation}
\label{ecp2.1}
\psi \big( y(\cdot) \big )(t)
=
\begin{cases}
\Bigg(
\displaystyle \prod_{k=1}^{\mu(t)}
A(t-k \cdot  {\bf 1})
\Bigg)
\cdot y \big( t-\mu(t) \cdot  {\bf 1} \big),\,\, \mbox{ if }\,\,
\mu(t)\geq 1,\\
\qquad \qquad \quad \quad
y(t),\,\,\qquad \qquad \quad \qquad\,
\mbox{ if }\,\,
\mu(t)=0.
\end{cases}
\end{equation}

$a)$ The sets $S$ and $V$ are real vector spaces, and
$\psi$ is an isomorphism of vector spaces.

$b)$ The vector space $V$ has infinite dimension.
\end{proposition}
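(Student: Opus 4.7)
My plan is to treat parts (a) and (b) separately, with part (a) taking up the bulk of the effort.

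For part (a), I would first observe that $S$ and $V$ are both subspaces of the vector space of all functions $\mathbb{N}^m \to \mathbb{R}^n$: the conditions $y(t+{\bf 1})=y(t)$ and $x(t+{\bf 1})=A(t)x(t)$ are each linear in the unknown sequence, so closure under addition and scalar multiplication is immediate, and the same linearity makes $\psi$ an $\mathbb{R}$-linear map. The next step is to verify that $\psi$ actually lands in $V$, i.e.\ that $\psi(y)(t+{\bf 1})=A(t)\,\psi(y)(t)$ for every $t$. I would split into $\mu(t)=0$ and $\mu(t)\geq 1$, using the key identity $\mu(t+{\bf 1})=\mu(t)+1$, so that the product defining $\psi(y)(t+{\bf 1})$ acquires one extra leftmost factor $A(t)$, while the base point $t-\mu(t){\bf 1}$ is the same in both expressions.

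The heart of the argument is bijectivity, and here the idea I would exploit is that both $S$ and $V$ are canonically parametrised by the same set, namely the ``diagonal boundary''
$$\partial:=\bigl\{\, t\in\mathbb{N}^m \,\bigm|\, \mu(t)=0\,\bigr\}.$$
For $y\in S$, iterating $y(t)=y(t-{\bf 1})$ gives $y(t)=y(t-\mu(t){\bf 1})$, so $y$ is determined by $y|_\partial$; conversely, since the map $t\mapsto t-\mu(t){\bf 1}$ sends every $t\in\mathbb{N}^m$ to a unique point of $\partial$, any function on $\partial$ extends uniquely to an element of $S$ with no compatibility conditions to verify. On the $V$ side, the recurrence $x(s+{\bf 1})=A(s)x(s)$ and the computation shown in the statement (the finite product $A(t-{\bf 1})\cdots A(t-\mu(t){\bf 1})$ applied to $x(t-\mu(t){\bf 1})$) show that $x\in V$ is uniquely determined by $x|_\partial$. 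Reading the formula for $\psi$ through these two identifications, $\psi$ simply becomes the identity on $\partial$ in either description, hence is a bijection. The main thing to be careful with will be keeping the case $\mu(t)=0$ consistent with the $\mu(t)\geq 1$ branch of the piecewise definition.

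For part (b), since $\psi$ is an isomorphism it suffices to show $\dim_{\mathbb{R}} S=\infty$. By the parametrisation above, $S$ is in linear bijection with the space of \emph{all} functions $\partial\to\mathbb{R}^n$. For $m\geq 2$ the set $\partial$ is infinite (it contains, for instance, the whole axis $\{(0,t^2,0,\ldots,0):t^2\in\mathbb{N}\}$), so one obtains infinitely many linearly independent sequences in $S$ by taking Kronecker-delta data at distinct points of $\partial$ and extending diagonally. I do not expect any real obstacle here; the only care needed is to confirm that the extension procedure indeed preserves linear independence, which is clear from the fact that distinct diagonals $\{s+k{\bf 1}:k\in\mathbb{N}\}$ starting at different points of $\partial$ do not intersect.
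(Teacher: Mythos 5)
Your proof is correct, but it takes a genuinely different route from the paper's. For part (a) the paper verifies almost nothing by hand: it regards $\psi(y)$ as the solution of the initial-value problem (\ref{ecp2.3}) with boundary data $y\big|_{t^\beta=0}$ on the $m$ hyperplanes, and obtains well-definedness, injectivity and surjectivity as three separate applications of the existence-and-uniqueness statement of Theorem \ref{rec.t1}. You instead work with the single set $\partial=\{t\,|\,\mu(t)=0\}$ (the union of those hyperplanes), use the partition of $\mathbb{N}^m$ into the disjoint diagonals $\{s+k\cdot{\bf 1}\,|\,k\in\mathbb{N}\}$, $s\in\partial$, and check $\psi(y)(t+{\bf 1})=A(t)\psi(y)(t)$ directly from the identity $\mu(t+{\bf 1})=\mu(t)+1$; bijectivity then falls out because restriction to $\partial$ identifies both $S$ and $V$ with the space of all functions $\partial\to\mathbb{R}^n$ and intertwines $\psi$ with the identity. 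Your version is self-contained (no appeal to Theorem \ref{rec.t1}, no compatibility conditions (\ref{ect1.1}) to track) and makes the structural reason for the isomorphism transparent; the paper's version is shorter on the page only because the work was already done in Lemma \ref{rec.lem1} and Theorem \ref{rec.t1}. For part (b) the difference is sharper: the paper exhibits the explicit family $y_k(t)=(t^1-t^2)^k v$ and proves independence via the finitely-many-roots argument for polynomials, whereas you get infinite dimension for free from $S\cong\{\,\varphi\colon\partial\to\mathbb{R}^n\,\}$ with $\partial$ infinite, taking Kronecker deltas supported on disjoint diagonals --- simpler, and it additionally identifies $\dim S$ as the dimension of the full function space on an infinite set. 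Both arguments are sound.
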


\begin{proof}
$a)$
First, let  us observe that the application $\psi$ is well defined,
i.e.,  the $m$-sequences definite by the formula
(\ref{ecp2.1}) verify the recurrence
$x(t+{\bf 1})=A(t)x(t)$. This follows immediately from the Theorem \ref{rec.t1}.

The respective sequence is the unique sequence
$x \colon \mathbb{N}^m \to \mathbb{R}^n$ which verifies the problem
\begin{equation}
\label{ecp2.3}
\left\{
\begin{split}
x(t+{\bf 1})
&=
A(t)x(t),
\quad \forall t \in \mathbb{N}^m,\\
x(t)\Big|_{t^\beta=0}
&=
y(t^1,\ldots, t^{\beta-1},0,t^{\beta+1}, \ldots, t^m),
\,\,
\forall
(t^1,\ldots, \widehat{t^\beta}, \ldots, t^m)\in \mathbb{N}^{m-1},\\
\forall \beta&\in \{1,2, \ldots, m\}.
\end{split}
\right.
\end{equation}

Obviously, $V$ and $S$ are vector spaces over $\mathbb{R}$.

One observes that $\psi$ is a morphism of vector spaces.
Let us prove that the function $\psi$ is injective:
let $y \in S$ such that
$\psi \big( y(\cdot) \big )(t)=0$, $\forall t \in \mathbb{N}^m$.
It follows $\psi \big( y(\cdot) \big )(t)\Big|_{t^\beta=0}=0$;
but, according to the relation (\ref{ecp2.1}), we have
$\psi \big( y(\cdot) \big )(t)\Big|_{t^\beta=0}=
y(t)\Big|_{t^\beta=0}$ and we obtain
$y(t)\Big|_{t^\beta=0}=0$.
Hence $y$ is the unique $m$-sequence which verifies the problem
\begin{equation*}
\left\{
\begin{split}
y(t+{\bf 1})
&=
y(t),
\quad \forall t \in \mathbb{N}^m,\\
y(t)\Big|_{t^\beta=0}
&=
0,
\,\,
\forall
(t^1,\ldots, \widehat{t^\beta}, \ldots, t^m)\in \mathbb{N}^{m-1},\\
\forall \beta&\in \{1,2, \ldots, m\}.
\end{split}
\right.
\end{equation*}
Applying the Theorem \ref{rec.t1}, it follows
$y(t)=0$, $\forall t \in \mathbb{N}^m$, i.e.,
$y$ is $m$-sequence zero.

The surjectivity of $\psi$: let $x \in V$.
We choose $y \colon \mathbb{N}^m \to \mathbb{R}^n$,
the unique $m$-sequence which verifies
\begin{equation*}
\left\{
\begin{split}
y(t+{\bf 1})
&=
y(t),
\quad \forall t \in \mathbb{N}^m,\\
y(t)\Big|_{t^\beta=0}
&=
x(t^1,\ldots, t^{\beta-1},0,t^{\beta +1}, \ldots, t^m),
\,\,
\forall
(t^1,\ldots, \widehat{t^\beta}, \ldots, t^m)\in \mathbb{N}^{m-1},\\
\forall \beta&\in \{1,2, \ldots, m\}.
\end{split}
\right.
\end{equation*}
Obviously $y \in S$ and $x(t)\Big|_{t^\beta=0}=y(t)\Big|_{t^\beta=0}$.
Hence the $m$-sequence $x$ verifies the relations
(\ref{ecp2.3}). It follows (Theorem \ref{rec.t1}) that
\begin{equation*}
x(t)
=
A(t-{\bf 1})A(t-2\cdot {\bf 1})\cdot \ldots
\cdot A(t-t^\beta \cdot  {\bf 1})\cdot y(t-t^\beta \cdot  {\bf 1}),
\mbox{ if }\,
\mu(t)=t^\beta \geq 1;
\end{equation*}
\begin{equation*}
x(t)
=y(t),\,\,
\mbox{ if }\,\,
\mu(t)=0,
\end{equation*}
hence, according the relation (\ref{ecp2.1}),
we have
$\psi \big( y(\cdot) \big )(t)=x(t)$, $\forall t \in \mathbb{N}^m$,
i.e., $\psi \big( y(\cdot) \big )=x(\cdot)$.

$b)$ Since $V$ and $S$ are isomorphic vector spaces, it is
sufficient to show that $S$ has an infinite dimension.
Equivalently, we shall show that $S$ contains an infinity of
linearly independent elements.

Let $v\in \mathbb{R}^n$, $v \neq 0$.
For each $k \in \mathbb{N}^*$, we consider the sequence
\begin{equation*}
y_k \colon \mathbb{N}^m \to \mathbb{R}^n,\,\,\,
y_k(t)=\big(  t^1 -t^2  \big )^kv,
\,\,\,
\forall t=(t^1,t^2, \ldots, t^m)\in \mathbb{N}^m.
\end{equation*}
Since
$y_k(t+{\bf 1})=\big(  t^1+1 -t^2-1  \big )^kv=y_k(t)$,
it follows $y_k \in S$.

Let $F \subseteq \mathbb{N}^*$, $F$ finite and non-void.
For each $k \in F$, we consider the sequence $a_k \in \mathbb{R}$,
such that
$\displaystyle
\sum_{k \in F} a_k y_k(\cdot)=0
$,
i.e.,
$
\displaystyle
\sum_{k \in F} a_k y_k(t)=0,
\forall
t\in \mathbb{N}^m.
$
Setting $t_2=0$, it follows
$
\displaystyle
\sum_{k \in F} a_k(t^1 )^kv=0,
\forall
t^1 \in \mathbb{N},
$
or
$
\displaystyle
\bigg( \sum_{k \in F} a_k(t^1 )^k\bigg)v=0,
\forall
t^1 \in \mathbb{N}.
$
Since $v\neq 0$, we deduce that
$
\displaystyle
 \sum_{k \in F} a_k(t^1 )^k=0,
\forall
t^1 \in \mathbb{N}.
$

Consequently the polynomial $P(X):=\displaystyle \sum_{k \in F} a_k X^k$
vanishes for any $t^1 \in \mathbb{N}$, i.e.,
$P(X)$ has an infinity of roots; whence, it follows that $P(X)$ is zero polynomial, i.e.,
$a_k=0$, $\forall k \in F$.

We proved that
$\big\{ y_k\, \big|\, k \in \mathbb{N}^* \big\}$
is a subset of $S$ consisting in linear independent elements.
Hence the dimension of $S$ is infinite.
\end{proof}

\begin{remark}
\label{rec.obs3}
For determining the solutions of the recurrence
$$x(t+{\bf 1})=A(t)x(t), \quad t \in \mathbb{N}^m,$$
it is sufficient to know the solutions of the recurrence
$$y(t+{\bf 1})=y(t), \quad t \in \mathbb{N}^m.$$
\end{remark}

Suppose that the functions
$f_1, f_2, \ldots, f_m \colon \mathbb{N}^{m-1} \to \mathbb{R}^n$
satisfy the relations {\rm (\ref{ect1.1})}. We want to determine the multivariate sequence
$x \colon \mathbb{N}^m \to \mathbb{R}^n$ such that
\begin{equation*}
\left\{
\begin{split}
x(t+{\bf 1})
&=
A(t)x(t),
\quad \forall t \in \mathbb{N}^m,\\
x(t)\Big|_{t^\beta=0}
&=
f_{\beta}(t^1,\ldots, \widehat{t^\beta}, \ldots, t^m),
\quad
\forall
(t^1,\ldots, \widehat{t^\beta}, \ldots, t^m)\in \mathbb{N}^{m-1},\\
\forall \beta&\in \{1,2, \ldots, m\}.
\end{split}
\right.
\end{equation*}

If $y \colon \mathbb{N}^m \to \mathbb{R}^n$ verifies
\begin{equation*}
\left\{
\begin{split}
y(t+{\bf 1})
&=
y(t),
\quad \forall t \in \mathbb{N}^m,\\
y(t)\Big|_{t^\beta=0}
&=
f_{\beta}(t^1,\ldots, \widehat{t^\beta}, \ldots, t^m),
\quad
\forall
(t^1,\ldots, \widehat{t^\beta}, \ldots, t^m)\in \mathbb{N}^{m-1},\\
\forall \beta &\in \{1,2, \ldots, m\},
\end{split}
\right.
\end{equation*}
then, from the proof of the Proposition {\rm \ref{rec.p2}}, it follows
$x(\cdot)=\psi \big( y(\cdot) \big )$.

\subsection{Examples}

{\bf Bivariate recurrences} Let $A \in \mathcal{M}_{n}(\mathbb{R})$
and two sequences
$f_1, f_2 \colon \mathbb{N} \to \mathbb{R}^n$,
with $f_1(0)=f_2(0)$. Then the unique double sequence
$x \colon \mathbb{N}^2 \to \mathbb{R}^n$ which solves the problem
\begin{equation*}
\left\{
\begin{split}
x(t^1+1,t^2+1)
&=
A\cdot x(t^1,t^2),
\quad \forall (t^1,t^2) \in \mathbb{N}^2,\\
x(0,t^2)
&=
f_1(t^2),
\quad
\forall
t^2 \in \mathbb{N},\\
x(t^1,0)
&=
f_2(t^1),
\quad
\forall
t^1 \in \mathbb{N}
\end{split}
\right.
\end{equation*}
is
\begin{equation*}
x(t^1,t^2)=
\left\{
  \begin{array}{ll}
    A^{t^1}f_1(t^2-t^1), & \hbox{ if }\,\, t^1\leq t^2 \\
    A^{t^2}f_2(t^1-t^2), & \hbox{ if } \,\,t^2\leq t^1.
  \end{array}
\right.
\end{equation*}

{\bf Trivariate recurrences} Let $A \in \mathcal{M}_{n}(\mathbb{R})$
and three double sequences
$f_1, f_2, f_3 \colon \mathbb{N}^2 \to \mathbb{R}^n$,
with
$$
f_1(0,t^3)=f_2(0,t^3),\,
f_1(t^2,0)=f_3(0,t^2)
,\,
f_2(t^1,0)=f_3(t^1,0),
\quad
\forall t^1,t^2,t^3\in \mathbb{N}.
$$
Then the unique triple sequence
$x \colon \mathbb{N}^3 \to \mathbb{R}^n$ which solves the problem
\begin{equation*}
\left\{
\begin{split}
x(t^1+1,t^2+1,t^3+1)
&=
A\cdot x(t^1,t^2,t^3),
\quad \forall (t^1,t^2,t^3) \in \mathbb{N}^3,\\
x(0,t^2,t^3)
&=
f_1(t^2,t^3),
\quad
\forall
(t^2,t^3) \in \mathbb{N}^2,\\
x(t^1,0,t^3)
&=
f_2(t^1,t^3),
\quad
\forall
(t^1,t^3) \in \mathbb{N}^2,\\
x(t^1,t^2,0)
&=
f_3(t^1,t^2),
\quad
\forall
(t^1,t^2) \in \mathbb{N}^2
\end{split}
\right.
\end{equation*}
is
\begin{equation*}
x(t^1,t^2,t^3)=
\left\{
  \begin{array}{ll}
    A^{t^1}f_1(t^2-t^1,t^3-t^1), & \hbox{ if }\,\,
t^1=\min\{t^1,t^2,t^3\} \\
    A^{t^2}f_2(t^1-t^2,t^3-t^2), & \hbox{ if }\,\,
t^2=\min\{t^1,t^2,t^3\} \\
    A^{t^3}f_3(t^1-t^3,t^2-t^3), & \hbox{ if }\,\,
t^3=\min\{t^1,t^2,t^3\}.
  \end{array}
\right.
\end{equation*}

\subsection{Solutions tailored to particular situations}

To find the solution for a discrete multitime diagonal recurrence, with constant coefficients,
$x(t+{\bf 1}) = Ax(t)$, we can use a discrete single-time recurrence
$y(t+1)=Ay(t)$, together a family of initial conditions (see the foregoing examples).
Particularly, for any constant matrix $A$, the solution of first order discrete multitime diagonal recurrence can be written as
$$x(t) = A^{<\epsilon,t>}\,x_0, \, \epsilon=(\epsilon_1,...,\epsilon_m), \,<\epsilon,{\bf 1}>=1.$$
The existence conditions of the powers of the matrix $A$ gives the conditions: (i) $\epsilon \in \mathbb{Z}^m$
if $A$ is non-degenerate, (ii) $\epsilon \in \mathbb{N}^m$ if $A$ is degenerate. If we add initial condition,
then, in both cases, one and only one component of $\epsilon$ is non-zero (depending on the initial condition).

\begin{theorem} Let $A\in {\cal M}_n(\mathbb{R})$ be a matrix which verify the equation $A^m=A$. For
any $x_0\in \mathbb{R}^n$ (constant), the vector $x(t)=A^{t^1+...+t^m}x_0$ verify the diagonal recurrence
$x(t+{\bf 1})=Ax(t)$.\end{theorem}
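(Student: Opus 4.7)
The plan is a direct substitution argument: starting from the candidate formula $x(t)=A^{t^1+\cdots+t^m}x_0$, evaluate both sides of the recurrence at $t+\mathbf{1}$ and at $t$, and show equality using only the algebraic identity $A^m=A$. No inductive machinery or reduction to Lemma~\ref{rec.lem1} is needed, since the statement is pointwise on $\mathbb{N}^m$.

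First I would compute the left-hand side. Since $(t+\mathbf{1})^\alpha=t^\alpha+1$ for every $\alpha$, the exponent telescopes as
\begin{equation*}
(t^1+1)+(t^2+1)+\cdots+(t^m+1)=\bigl(t^1+\cdots+t^m\bigr)+m,
\end{equation*}
so $x(t+\mathbf{1})=A^{\,t^1+\cdots+t^m+m}x_0$. Set $N:=t^1+\cdots+t^m\in\mathbb{N}$ to lighten notation. Then the computation reduces to proving $A^{N+m}x_0=A\cdot A^{N}x_0$, that is, $A^{N+m}=A^{N+1}$ as matrices (from which the equality of both sides acting on the arbitrary vector $x_0$ follows).

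The key step is the identity $A^{N+m}=A^{N+1}$ for every $N\in\mathbb{N}$. Writing $A^{N+m}=A^{N}\cdot A^{m}$ and using the hypothesis $A^m=A$, we get $A^{N+m}=A^{N}\cdot A=A^{N+1}$. (Equivalently, $A^{N+m}=A^{m}\cdot A^{N}=A\cdot A^{N}=A^{N+1}$.) Substituting back gives $x(t+\mathbf{1})=A^{N+1}x_0=A\cdot A^{N}x_0=A\cdot x(t)$, which is exactly the diagonal recurrence to be verified.

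I do not anticipate a genuine obstacle here; the whole argument is a two-line manipulation once the exponent on the left is rewritten. The only point deserving a brief comment is that the identity $A^{N+m}=A^{N+1}$ requires $N\geq 0$ so that $A^{N}$ is defined without invertibility assumptions on $A$ — which is automatic since $t^\alpha\in\mathbb{N}$. Thus the proof works uniformly whether $A$ is degenerate or not, in line with the discussion preceding the theorem.
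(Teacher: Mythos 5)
Your proof is correct and is essentially the paper's own argument: the paper likewise writes $x(t+\mathbf{1})=A^{m+t^1+\cdots+t^m}x_0=A^m A^{t^1+\cdots+t^m}x_0=Ax(t)$ using $A^m=A$. Your version just spells out the exponent bookkeeping and the remark that $N\geq 0$ avoids any invertibility issue.
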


\begin{proof} By computation,
$x(t+{\bf 1})=A^{m+t^1+...+t^m}x_0=A^{m}A^{t^1+...+t^m}x_0=Ax(t)$.
\end{proof}

A matrix $B$ is said to be an $m$-th root of an $n\times n$ matrix $A$ if $B^m = A$, where $m$ is a positive integer greater than or
equal to $2$. If there is no such matrix for any integer $m \geq 2$, then $A$ is called a rootless matrix.
A non-singular matrix and a diagonalizable matrix have $m$-th roots in complex numbers (see \cite{YN}).

\begin{theorem} If the $n\times n$ matrix $A$ has an $m$-root $B$, then a solution of the
diagonal recurrence $x(t+{\bf 1})=Ax(t)$ is $x(t)=B^{t^1+...+t^m}x_0$, where $x_0$ is a constant vector.\end{theorem}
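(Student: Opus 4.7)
The plan is to verify the claimed solution by direct substitution into the diagonal recurrence, in the same spirit as the proof of the previous theorem. Since $x(t) = B^{t^1 + \ldots + t^m} x_0$ is defined for every $t \in \mathbb{N}^m$ (the exponent is a non-negative integer, so no invertibility of $B$ is needed), the statement is meaningful without further assumptions on $B$.

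First I would evaluate the left-hand side. Shifting $t$ to $t + {\bf 1}$ adds $1$ to each coordinate, so the exponent increases by exactly $m$:
\begin{equation*}
x(t + {\bf 1}) = B^{(t^1+1) + (t^2+1) + \ldots + (t^m+1)} x_0 = B^{m + t^1 + \ldots + t^m} x_0.
\end{equation*}

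Next I would use the additivity of exponents for powers of a single matrix, $B^{p+q} = B^p B^q$ for $p,q \in \mathbb{N}$, to separate the $m$ and write
\begin{equation*}
B^{m + t^1 + \ldots + t^m} x_0 = B^m \cdot B^{t^1 + \ldots + t^m} x_0.
\end{equation*}
The hypothesis $B^m = A$ then converts this into $A \cdot B^{t^1 + \ldots + t^m} x_0 = A x(t)$, establishing the recurrence. The argument is essentially a one-line computation once the exponent split is performed, so there is no genuine obstacle; the only point to be slightly careful about is that the exponent arithmetic is valid because we never need to invert $B$ (all exponents remain in $\mathbb{N}$), which is why the theorem makes no nondegeneracy assumption on $A$ or $B$.
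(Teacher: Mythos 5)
Your proposal is correct and follows exactly the paper's own argument: substitute $t+\mathbf{1}$, note the exponent increases by $m$, split off $B^m=A$, and conclude $x(t+\mathbf{1})=Ax(t)$. The extra remark about never needing to invert $B$ is a harmless clarification not present in the paper.
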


\begin{proof} Explicitly, we have
$x(t+{\bf 1})=B^{m+t^1+...+t^m}x_0=B^{m}B^{t^1+...+t^m}x_0=Ax(t)$.
\end{proof}

\subsection{Characteristic equation}

We refer to the recurrence
$$x(t+{\bf 1})= A x(t),$$
satisfying the initial conditions $(\ref{ect1.2})$ made compatible by $(\ref{ect1.1})$.

If the matrix $A$ is diagonalizable, then there exists a basis $\{v_1,...,v_n\}$
consisting in eigenvectors, in $\mathbb{C}^n=\mathcal{M}_{n,1}(\mathbb{C})$. Denote by $\lambda_k$
the corresponding eigenvalues (distinct or not). Any solution of the previous recurrence
is of the form
$$x(t)=\sum_{k=1}^n c_k(t)\,\lambda_k^{\mu(t)}\,v_k,$$
where $c_k(t)\in \mathbb{C}$ and $c_k(t+{\bf 1})=c_k(t)$.
The functions $c_k(t)$ are determined by the initial conditions $(\ref{ect1.2})$.

\begin{remark}
We look for solutions of the form
$x(t) = v\,\lambda^{<\epsilon,t>}$, with $<\epsilon,{\bf 1}>=1$ and $v\not=0$.
The existence conditions of the powers of the eigenvalue $\lambda$ gives the conditions: (i) $\epsilon \in \mathbb{Z}^m$
if $A$ is non-degenerate, (ii) $\epsilon \in \mathbb{N}^m$ if $A$ is degenerate. It follows
$Av=\lambda v.$ Consequently, $\lambda$ is an eigenvalue, and $v$ is an eigenvector.
The equation $P(\lambda)=\det(A-\lambda I)=0$ is called characteristic equation.
If we can determine $n$ pairs $(\lambda_k,v_k)$, then  a particular solution of the recurrence is
$$x(t)=\sum_{k=1}^n c_k\,\lambda_k^{<\epsilon,t>}\,v_k.$$

When $\epsilon =(0,...,0,1,0,...,0)$, the scalar product is $<\epsilon,t>=t^\alpha$.
We find a solution of the form
$$x(t)=\sum_{k=1}^n c_k\,\lambda_k^{t^\alpha}\,v_k.$$
\end{remark}

\section{Multivariate diagonal recurrences of \\superior order}

Let $k\geq 2$ and the matrix sequences
$$B_{0}, B_1, \ldots, B_{k-1} \colon
\mathbb{N}^m \to \mathcal{M}_{n}(\mathbb{R}),
f \colon \mathbb{N}^m \to \mathbb{R}^n.$$

The linear diagonal vectorial recurrences of order $k$ have the form
\begin{equation}
\label{recOrdSup1}
x(t+k\cdot {\bf 1})
=
\sum_{j=0}^{k-1}B_j(t) x(t+j\cdot {\bf 1})+f(t),
\end{equation}
with $x(t^1, \ldots, t^{\beta-1},0,t^{\beta+1}, \ldots, t^{m})$,
$\ldots$, $x(t^1, \ldots, t^{\beta-1},k-1,t^{\beta+1}, \ldots, t^{m})$
given for any
$\beta \in \{1,2,\ldots, m\}$.
The unknown sequence is
$x \colon \mathbb{N}^m \to \mathbb{R}^n$.

These recurrences easily be reduced to order one recurrences of the form (\ref{ect1.2}).
Indeed, it is enough to consider
$$y \colon \mathbb{N}^m \to \big ( \mathbb{R}^n \big)^k=\mathcal{M}_{nk,1}(\mathbb{R}),\,\,
b \colon \mathbb{N}^m \to \big ( \mathbb{R}^n \big)^k=\mathcal{M}_{nk,1}(\mathbb{R}),$$
\begin{equation}
\label{recOrdSup2}
y(t)=\left(
       \begin{array}{c}
         x(t) \\
         x(t+ {\bf 1}) \\
          x(t+ 2\cdot{\bf 1}) \\
         \vdots \vspace{0.1 cm} \\
         x(t+(k-1)\cdot {\bf 1}) \\
       \end{array}
     \right);
     \quad
b(t)=\left(
       \begin{array}{c}
          O_{n,1} \\
          O_{n,1} \\
         \vdots \vspace{0.1 cm} \\
          O_{n,1} \\
         f(t) \\
       \end{array}
     \right);
\end{equation}
and
$$A \colon \mathbb{N}^m \to  \mathcal{M}_{nk}(\mathbb{R}),$$
\begin{equation}
\label{recOrdSup3}
A(t)=
\left(
  \begin{array}{cccccc}
    O_n & I_n & O_n & \ldots & O_n & O_n \\
    O_n & O_n & I_n & \ldots & O_n & O_n \\
    \vdots \vspace{0.1 cm}  &  &  &  &  \\
    O_n & O_n & O_n & \ldots & O_n & I_n \\
    B_0(t) & B_1(t) & B_2(t) & \ldots & B_{k-2}(t) & B_{k-1}(t) \\
  \end{array}
\right).
\end{equation}
Then the sequence $y$ verifies
\begin{equation}
\label{recOrdSup4}
y(t+{\bf 1})
=
A(t)y(t)+b(t),
\quad \forall t \in \mathbb{N}^m,
\end{equation}
being given
$y(t^1, \ldots, t^{\beta-1},0,t^{\beta+1}, \ldots, t^{m})$, $\forall
\beta\in \{1,2, \ldots, m\}$.

\section{Second order multivariate recurrences}

Apart from the first order multitime recurrences, an important role is played by the second order multitime recurrences.

\begin{lemma}
\label{rec.lem2}
Let $A\in \mathcal{M}_{2}(\mathbb{R})$ and
$\lambda_1, \lambda_2$ its eigenvalues, i.e., the roots of the polynomial
\begin{equation}
\label{ecl2.1}
P(\lambda)=\lambda^2 - \mathrm{Tr} (A) \lambda +\det(A).
\end{equation}

\noindent
$i)$ If $\lambda_1, \lambda_2 \in \mathbb{R}$
and $\lambda_1 \neq \lambda_2$, then $\forall k \in \mathbb{N}$
\begin{equation}
\label{ecl2.2}
A^k=\frac{\lambda_1^k-\lambda_2^k}{\lambda_1-\lambda_2}\,A-
\frac{\lambda_2\lambda_1^k-\lambda_1\lambda_2^k}{\lambda_1-\lambda_2}\,I_2.
\end{equation}

\noindent
$ii)$ If $\lambda_1 = \lambda_2 $, then $\forall k \in \mathbb{N}$
\begin{equation}
\label{ecl2.3}
A^k=k\lambda_1^{k-1}\,A-(k-1)\lambda_1^k\,I_2.
\end{equation}

\noindent
$iii)$ If $\lambda_1, \lambda_2 \in \mathbb{C} \setminus \mathbb{R}$,
$\lambda_1, \lambda_2=r(\cos \theta \pm i\sin \theta)$,
with $r>0$, $\theta \in (0,2\pi) \setminus \{\pi\}$,
then $\forall k \in \mathbb{N}$
\begin{equation}
\label{ecl2.4}
A^k=\frac{r^{k-1}\sin k\theta}{\sin \theta}\,A-
\frac{r^{k}\sin (k-1)\theta}{\sin \theta}\,I_2.
\end{equation}
\end{lemma}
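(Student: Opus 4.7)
The plan is to reduce all three cases to a single scalar linear recurrence via the Cayley--Hamilton theorem. Since $A\in\mathcal{M}_2(\mathbb{R})$ has characteristic polynomial (\ref{ecl2.1}), one has $A^2=(\lambda_1+\lambda_2)A-\lambda_1\lambda_2\,I_2$. A short induction on $k$ then shows that for every $k\in\mathbb{N}$ there exist scalars $p_k,q_k$ with $A^k=p_kA+q_kI_2$, satisfying $p_0=q_1=0$, $q_0=p_1=1$, and the coupled recurrences
\begin{equation*}
p_{k+1}=(\lambda_1+\lambda_2)p_k+q_k,\qquad q_{k+1}=-\lambda_1\lambda_2\,p_k.
\end{equation*}
Eliminating $q_k$ produces the homogeneous second-order recurrence $p_{k+2}=(\lambda_1+\lambda_2)p_{k+1}-\lambda_1\lambda_2\,p_k$, whose auxiliary equation is exactly the characteristic equation of $A$; moreover $q_k=-\lambda_1\lambda_2\,p_{k-1}$ for $k\geq 1$.

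Once this common framework is in place, I dispatch the three cases as standard closed-form solutions of this scalar recurrence. In case $(i)$, with $\lambda_1,\lambda_2$ distinct and real, the general solution is $p_k=C_1\lambda_1^k+C_2\lambda_2^k$, and imposing $p_0=0$, $p_1=1$ gives $p_k=(\lambda_1^k-\lambda_2^k)/(\lambda_1-\lambda_2)$; substituting into $q_k=-\lambda_1\lambda_2\,p_{k-1}$ recovers (\ref{ecl2.2}). In case $(ii)$, with $\lambda_1=\lambda_2$, the solutions are of the form $(C_1+C_2k)\lambda_1^k$; the initial data give $p_k=k\lambda_1^{k-1}$ and therefore $q_k=-(k-1)\lambda_1^k$, which is (\ref{ecl2.3}). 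In case $(iii)$, with $\lambda_{1,2}=r(\cos\theta\pm i\sin\theta)$, I apply the formula from case $(i)$ but now interpreted over $\mathbb{C}$; De Moivre's identity yields $\lambda_1^k-\lambda_2^k=2ir^k\sin k\theta$ and $\lambda_1-\lambda_2=2ir\sin\theta$, hence $p_k=r^{k-1}\sin(k\theta)/\sin\theta$, which is real, and similarly $q_k=-r^k\sin((k-1)\theta)/\sin\theta$, giving (\ref{ecl2.4}).

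There is no real obstacle; the only subtlety is case $(iii)$, where one must check that the a priori complex expression produced by the case-$(i)$ formula simplifies via De Moivre to a real quantity depending only on $r$ and $\theta$. Everything else is the standard theory of linear recurrences with constant coefficients, and the elegance of the approach is that Cayley--Hamilton forces the auxiliary equation of the scalar recurrence to coincide with the very characteristic equation of $A$ from which $\lambda_1,\lambda_2$ arose, so the three cases of the lemma correspond exactly to the three standard regimes (distinct real, repeated, complex conjugate) in that theory.
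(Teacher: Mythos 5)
Your proof is correct, and it reaches the coefficients $c_1,c_0$ of $A^k=c_1A+c_0I_2$ by a genuinely different mechanism than the paper. Both arguments rest on the Cayley--Hamilton identity $P(A)=O_2$, but the paper divides $X^k$ by $P(X)$ to get $X^k=P(X)Q(X)+c_1X+c_0$ and then determines $c_1,c_0$ by evaluating the remainder at the roots --- using the two equations $\lambda_j^k=c_1\lambda_j+c_0$ in the distinct-root cases $(i)$ and $(iii)$, and the derivative condition $S'(\lambda_1)=0$ for the double root in case $(ii)$. You instead set up the coupled recurrences $p_{k+1}=(\lambda_1+\lambda_2)p_k+q_k$, $q_{k+1}=-\lambda_1\lambda_2 p_k$, eliminate $q_k$, and invoke the closed-form theory of second-order linear recurrences whose auxiliary equation is $P$. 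The payoff of your route is that the repeated-root case falls out of the standard $(C_1+C_2k)\lambda_1^k$ basis with no separate derivative argument, and the three cases of the lemma align transparently with the three regimes of that theory; the cost is the extra (easy) induction establishing $A^k=p_kA+q_kI_2$ and the recurrences, which the paper gets for free from polynomial division. Your treatment of case $(iii)$ --- applying the case-$(i)$ formula over $\mathbb{C}$ and simplifying by De Moivre, noting the result is real --- matches what the paper does implicitly when it says cases $(i)$ and $(iii)$ follow from the same computation. The only blemish, inherited from the statement itself, is the degenerate reading of $k\lambda_1^{k-1}$ at $k=0$ when $\lambda_1=0$; it does not affect the argument.
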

\begin{proof}
There exists $Q(X)\in \mathbb{R}[X]$ and
$c_0, c_1 \in \mathbb{R}$ such that
\begin{equation*}
X^k=P(X)Q(X)+c_1X+c_0.
\end{equation*}
Hence $A^k=P(A)Q(A)+c_1A+c_0I_2$. Since $P(A)=O_2$, we obtain
\begin{equation*}
A^k=c_1A+c_0I_2.
\end{equation*}
If $\lambda_1 \neq \lambda_2$,
for $X=\lambda_1$, $X=\lambda_2$ ($P(\lambda_j)=0$), then
\begin{equation*}
\lambda_1^k=c_1\lambda_1+c_0,
\quad
\lambda_2^k=c_1\lambda_2+c_0,
\end{equation*}
whence we find
\begin{equation*}
c_1=\frac{\lambda_1^k-\lambda_2^k}{\lambda_1-\lambda_2},
\quad
c_0=-
\frac{\lambda_2\lambda_1^k-\lambda_1\lambda_2^k}{\lambda_1-\lambda_2}.
\end{equation*}
Consequently, we get the results in $i)$ and $iii)$.

If $\lambda_1 = \lambda_2$, then $\lambda_1$ is a double root of the polynomial
$S(X):=X^k-c_1X-c_0$. Hence $S(\lambda_1)=0$ and $S'(\lambda_1)=0$. From these relations it follows
$c_1=k\lambda_1^{k-1}$, $c_0=-(k-1)\lambda_1^k$ and one obtains the result in $ii)$.
\end{proof}

\vspace{0.2 cm}
We consider now diagonal recurrences of order two
\begin{equation}
\label{recOrd2}
x(t+2 \cdot {\bf 1})+ax(t+{\bf 1})+bx(t)=0,
\end{equation}
with $a,b \in \mathbb{R}$ constants and with initial conditions
$$x(t^1, \ldots, t^{\beta-1},0,t^{\beta+1}, \ldots, t^{m}),\,\, x(t^1, \ldots, t^{\beta-1},1,t^{\beta+1}, \ldots, t^{m})$$
given for any
$\beta \in \{1,2,\ldots, m\}$
and
$x\colon \mathbb{N}^m \to \mathbb{R}$
unknown sequence. Denoting
\begin{equation*}
y(t)=
\left(
  \begin{array}{c}
    x(t) \\
    x(t+{\bf 1}) \\
  \end{array}
\right),
\quad
A=
\left(
  \begin{array}{cc}
    0 & 1 \\
    -b & -a \\
  \end{array}
\right)
\end{equation*}
the multivariate sequence $y$ verifies
$y(t+{\bf 1})= Ay(t)$. From the Corollary of the Theorem \ref{rec.t1}, it follows
\begin{equation*}
\begin{split}
&y(t)=
\left(
  \begin{array}{c}
    x(t) \\
    x(t+{\bf 1}) \\
  \end{array}
\right)
=
A^{t^\beta}y(t-t^\beta \cdot {\bf 1})
=
A^{t^\beta}\left(
  \begin{array}{c}
    x(t-t^\beta \cdot {\bf 1}) \\
    x(t-(t^\beta-1) \cdot {\bf 1}) \\
  \end{array}
\right),\\
&\mbox{ if }\,\,
\mu(t)=t^\beta.
\end{split}
\end{equation*}

The characteristic polynomial
$P(\lambda)=\lambda^2+a\lambda +b$
of the matrix $A$ has the roots $\lambda_1, \lambda_2$.
According the Lemma \ref{rec.lem2}, the matrix $A^{t^\beta}$ is of the form
$A^{t^\beta}=c_1(t^\beta)A+c_0(t^\beta)I_2$ (formulas
(\ref{ecl2.2}), (\ref{ecl2.3}), (\ref{ecl2.4})). Hence
\begin{equation*}
\begin{split}
\left(
  \begin{array}{c}
    x(t) \\
    x(t+{\bf 1}) \\
  \end{array}
\right)
&=c_1(t^\beta)\,A  \left(
  \begin{array}{c}
    x(t-t^\beta \cdot {\bf 1}) \\
    x(t-(t^\beta-1) \cdot {\bf 1}) \\
  \end{array}
\right)
+
c_0(t^\beta)\left(
  \begin{array}{c}
    x(t-t^\beta \cdot {\bf 1}) \\
    x(t-(t^\beta-1) \cdot {\bf 1}) \\
  \end{array}
\right),\\
\quad\mbox{ if }\,\,
\mu(t)&=t^\beta.
\end{split}
\end{equation*}
Consequently
\begin{equation*}
x(t)
=c_1(t^\beta)x(t-(t^\beta-1) \cdot {\bf 1})
+
c_0(t^\beta)x(t-t^\beta \cdot {\bf 1}),\,\,
\mbox{ if }\,\,
\mu(t)=t^\beta.
\end{equation*}

We have proved the following result
\begin{theorem}
\label{rec.t2}
Let $m\geq 2$, $a,b \in \mathbb{R}$ and $\lambda_1, \lambda_2$
the roots of the polynomial $P(\lambda)=\lambda^2+a\lambda +b$.
Suppose that the $(m-1)$-sequences
$$f_1, f_2, \ldots, f_m \colon \mathbb{N}^{m-1} \to \mathbb{R},\,\,
g_1, g_2, \ldots, g_m \colon \mathbb{N}^{m-1} \to \mathbb{R},$$
satisfy, for any $\alpha,\beta \in \{1, 2, \ldots, m\}$, the compatibility conditions
\begin{equation}
\label{ect2.1}
\begin{split}
f_{\alpha}(t^1,\ldots, \widehat{t^\alpha}, \ldots, t^m)\Big|_{t^\beta=0}
=
f_{\beta}(t^1,\ldots, \widehat{t^\beta}, \ldots, t^m)\Big|_{t^\alpha=0},\\
g_{\alpha}(t^1,\ldots, \widehat{t^\alpha}, \ldots, t^m)\Big|_{t^\beta=1}
=
g_{\beta}(t^1,\ldots, \widehat{t^\beta}, \ldots, t^m)\Big|_{t^\alpha=1},\\
f_{\alpha}(t^1,\ldots, \widehat{t^\alpha}, \ldots, t^m)\Big|_{t^\beta=1}
=
g_{\beta}(t^1,\ldots, \widehat{t^\beta}, \ldots, t^m)\Big|_{t^\alpha=0},\\
\forall
t^1, \ldots, t^{\alpha-1}, t^{\alpha+1}, \ldots, t^{\beta-1}, t^{\beta+1}, \ldots, t^m
\in \mathbb{N}.
\end{split}
\end{equation}
Then the unique $m$-sequence
$x \colon \mathbb{N}^m \to \mathbb{R}$ which verifies
\begin{equation}
\label{ect2.2}
\begin{split}
&x(t+2 \cdot {\bf 1})+ax(t+{\bf 1})+bx(t)=0,
\quad \forall t \in \mathbb{N}^m,\\
&x(t)\Big|_{t^\gamma=0}
=
f_{\gamma}(t^1,\ldots, \widehat{t^\gamma}, \ldots, t^m),
\quad
\forall
(t^1,\ldots, \widehat{t^\gamma}, \ldots, t^m)\in \mathbb{N}^{m-1},\\
&x(t)\Big|_{t^\gamma=1}
=
g_{\gamma}(t^1,\ldots, \widehat{t^\gamma}, \ldots, t^m),
\quad
\forall
(t^1,\ldots, \widehat{t^\gamma}, \ldots, t^m)\in \mathbb{N}^{m-1},\\
& \qquad \forall \gamma\in \{1,2, \ldots, m\},
\end{split}
\end{equation}
is defined by the following formulas:

\noindent
$i)$ If $\lambda_1, \lambda_2 \in \mathbb{R}$
and $\lambda_1 \neq \lambda_2$, then
\begin{equation}
\label{ect2.3}
\begin{split}
x(t)
&=\frac{\lambda_1^{t^\beta}-\lambda_2^{t^\beta}}{\lambda_1-\lambda_2}\,
g_{\beta}(t^1-t^\beta+1, \ldots, \widehat{t^\beta}, \ldots, t^{m-1}-t^\beta+1)\\
&-\frac{\lambda_2\lambda_1^{t^\beta}-\lambda_1\lambda_2^{t^\beta}}{\lambda_1-\lambda_2}\,
f_{\beta}(t^1-t^\beta, \ldots, \widehat{t^\beta}, \ldots, t^{m-1}-t^\beta),\,
\mbox{ if }\,\, \mu(t)=t^\beta.
\end{split}
\end{equation}

\noindent
$ii)$ If $\lambda_1 = \lambda_2 $, then
\begin{equation}
\label{ect2.4}
\begin{split}
x(t)
&=t^\beta \lambda_1^{t^\beta-1}\,
g_{\beta}(t^1-t^\beta+1, \ldots, \widehat{t^\beta}, \ldots, t^{m-1}-t^\beta+1)\\
&-(t^\beta-1)\lambda_1^{t^\beta}\,
f_{\beta}(t^1-t^\beta, \ldots, \widehat{t^\beta}, \ldots, t^{m-1}-t^\beta),\,
\mbox{ if }\,\, \mu(t)=t^\beta.
\end{split}
\end{equation}

\noindent
$iii)$ If $\lambda_1, \lambda_2 \in \mathbb{C} \setminus \mathbb{R}$,
$\lambda_1, \lambda_2=r(\cos \theta \pm i\sin \theta)$,
with $r>0$, $\theta \in (0,2\pi) \setminus \{\pi\}$, then
\begin{equation}
\label{ect2.5}
\begin{split}
x(t)
&=\frac{r^{t^\beta-1}\sin t^\beta \theta}{\sin \theta}\,
g_{\beta}(t^1-t^\beta+1, \ldots, \widehat{t^\beta}, \ldots, t^{m-1}-t^\beta+1)\\
&-\frac{r^{t^\beta}\sin (t^\beta-1)\theta}{\sin \theta}\,
f_{\beta}(t^1-t^\beta, \ldots, \widehat{t^\beta}, \ldots, t^{m-1}-t^\beta),\,
\mbox{ if }\,\, \mu(t)=t^\beta.
\end{split}
\end{equation}
\end{theorem}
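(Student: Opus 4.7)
The plan is to invoke the reduction already carried out just before the statement: with $y(t)=\bigl(x(t),\,x(t+\mathbf{1})\bigr)^{\!T}$ and $A$ the companion matrix of $P$, the scalar second-order recurrence for $x$ becomes the first-order vector recurrence $y(t+\mathbf{1})=Ay(t)$. Because the characteristic polynomial of $A$ is exactly $P(\lambda)=\lambda^2+a\lambda+b$, Lemma \ref{rec.lem2} supplies closed forms for $A^{t^\beta}$ in each of the three spectral cases, and the Corollary of Theorem \ref{rec.t1} (applied to $y$) will take care of existence, uniqueness, and the closed form $y(t)=A^{t^\beta}\,y(t-t^\beta\cdot\mathbf{1})$ whenever $\mu(t)=t^\beta$.

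The first task is to translate the prescribed data for $x$ into initial data for $y$ on the faces $t^\beta=0$. Since $(t+\mathbf{1})^\beta=1$ on such a face,
$$y(t)\Big|_{t^\beta=0}=\Bigl(f_\beta(t^1,\ldots,\widehat{t^\beta},\ldots,t^m),\;g_\beta(t^1+1,\ldots,\widehat{t^\beta},\ldots,t^m+1)\Bigr)^{\!T}.$$
I would then check that the vector-valued compatibility (\ref{ect1.1}) required by Theorem \ref{rec.t1} is equivalent, component by component, to the first two identities in (\ref{ect2.1}): the first matches the two $f_\gamma$'s at the corner $t^\alpha=t^\beta=0$, and the second, once the $+\mathbf{1}$ shift in the lower entry of $y$ is unwound, matches the two $g_\gamma$'s at the corner $t^\alpha=t^\beta=1$. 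The third identity in (\ref{ect2.1}) is an additional consistency at the mixed corner $t^\alpha=0$, $t^\beta=1$, which one needs to guarantee that the scalar sequence $x:=y_1$ produced by Theorem \ref{rec.t1} really matches the prescribed $g_\gamma$ on every face $t^\gamma=1$, in particular at its intersections with the faces $t^\alpha=0$ for $\alpha\neq\gamma$. This compatibility discussion is the principal bookkeeping obstacle.

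With these compatibilities secured, Lemma \ref{rec.lem2} writes $A^{t^\beta}=c_1(t^\beta)\,A+c_0(t^\beta)\,I_2$, so that the first row of $A^{t^\beta}$ is $(c_0(t^\beta),\,c_1(t^\beta))$. Extracting the top entry of $y(t)=A^{t^\beta}y(t-t^\beta\cdot\mathbf{1})$ and substituting $x(t-t^\beta\cdot\mathbf{1})=f_\beta(t^1-t^\beta,\ldots,\widehat{t^\beta},\ldots,t^m-t^\beta)$ and $x(t-(t^\beta-1)\cdot\mathbf{1})=g_\beta(t^1-t^\beta+1,\ldots,\widehat{t^\beta},\ldots,t^m-t^\beta+1)$ yields
$$x(t)=c_0(t^\beta)\,f_\beta(\ldots)+c_1(t^\beta)\,g_\beta(\ldots+\mathbf{1}),\quad\mu(t)=t^\beta.$$
Plugging in the three explicit expressions for $(c_0,c_1)$ from parts (i), (ii), (iii) of Lemma \ref{rec.lem2} produces (\ref{ect2.3}), (\ref{ect2.4}) and (\ref{ect2.5}) respectively; the remainder of the argument is direct substitution.
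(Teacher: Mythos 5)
Your proposal is correct and follows essentially the same route as the paper: reduction to the first-order vector recurrence $y(t+\mathbf{1})=Ay(t)$ with the companion matrix, application of the Corollary of Theorem \ref{rec.t1} to get $y(t)=A^{t^\beta}y(t-t^\beta\cdot\mathbf{1})$ for $\mu(t)=t^\beta$, and extraction of the first component via Lemma \ref{rec.lem2}'s representation $A^{t^\beta}=c_1(t^\beta)A+c_0(t^\beta)I_2$. Your explicit discussion of how the compatibility conditions (\ref{ect2.1}) translate into the hypothesis (\ref{ect1.1}) for the vector initial data of $y$ is a point the paper leaves implicit, but the argument is the same.
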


\section{Discrete minimal submanifolds}

Let $(M,g)$ be a Riemannian manifold and $(N,h)$ be a Riemannian oriented submanifold (possibly with boundary).
Let $x=(x^i),\,i=1,...,n,$ be the local coordinates in $M$ and $t=(t^\alpha),\,\alpha=1,...,m,$
the local coordinates in $N$. If the parametric equations of the submanifold $N$ are
$x^i = x^i(t)$, then the induced Riemannian metric has the components
$$h_{\alpha\beta}(t)= g_{ij}(x(t))x^i_\alpha(t)x^j_\beta(t),$$
where $x^i_\alpha(t)=\frac{\partial x^i}{\partial t^\alpha}(t)$.
The determinant of this metric is denoted by $d = \det(h_{\alpha\beta}(t))$.
If $\Sigma \subset N$ is compact subset, corresponding to $t\in \Omega\hbox{- compact}$, then its area is
$$\int_\Sigma d\sigma= \int_\Omega \sqrt{d}\,\,dt^1\wedge...\wedge dt^m.$$
The submanifold $\Sigma$ is called {\it minimal} if and only if it is a critical
point of the area functional
$$I(x(\cdot))=\int_\Omega \sqrt{d}\,\,dt^1\wedge...\wedge dt^m,$$
for all compactly supported variations. Introducing the Lagrangian
$$L= \sqrt{d}= \sqrt{\det\left(g_{ij}(x(t))x^i_\alpha(t)x^j_\beta(t)\right)}$$
a minimal submanifold is solution of Euler-Lagrange PDEs system
$$\frac{\partial L}{\partial x^i}- D_\alpha\frac{\partial L}{\partial x^i_\alpha}=0,$$
i.e., vanishing {\it mean curvature vector},
$$\sum_r\Omega_{r|\alpha\beta}h^{\alpha\beta}\xi_r^i=0.$$
Though continuous models of minimal submanifolds are usually more convenient and yield results which are
more transparent, the discrete models are also of interest being in fact discrete dynamical systems.

The theory of integrators for multi-parameter Lagrangian dynamics shows that
instead of discretization of Euler-Lagrange PDEs we must use a discrete Lagrangian,
a discrete action, and then discrete Euler-Lagrange equations (see \cite{MW}). Of course, the discrete
Euler-Lagrange equations associated to multitime discrete Lagrangian can be solved
successfully by the Newton method if it is convergent for a convenient step.

To simplify, we consider the minimal $2$-dimensional submanifols, having the coordinates
$t=(t^1,t^2)$. The discretization of the Lagrangian $L(x(t),x_\alpha(t))$ can be performed
by using the centroid rule (see \cite{U1}, \cite{U7}, \cite{UAC}) which consists in: (i) the substitution
of the point $(t^1,t^2)$ with $(mh^1,nh^2)$, for the fixed step $(h^1, h^2)$; (ii) the substitution
of the point $x(t^1,t^2)$ with the fraction
$$\xi_{mn}=\frac{x_{m\,n}+x_{m+1\,n} +x_{m\, n +1}}{3};$$
and (iii) the substitution of the partial velocities $x_1=\frac{\partial x}{\partial t^1}$,
$x_2=\frac{\partial x}{\partial t^2}$ by the fractions
$\frac{x_{m+1\,n}-x_{m\,n}}{h_1}$, $\frac{x_{m\,n+1}-x_{m\,n}}{h_2}.$
We can write
$$L^2_d= \det{h_{\alpha\beta}(m,n)}$$
and since
$$h_{\alpha \beta} = g_{ij}( \xi _{mn}) \frac {(x^i_{mn})_\alpha }{h^\alpha} \frac {(x^j_{mn})_\beta }{h^\beta},\, (x^i_{mn})_1=x^i_{m+1\,n}-x^i_{mn},\,
(x^i_{mn})_2=x^i_{m\,n+1}-x^i_{mn},$$
it follows
$$h_1^2h_2^2\,L^2_d = \det \left(g_{ij}(\xi_{mn})(x^i_{m\,n})_\alpha (x^j_{mn})_\beta\right).$$
The discrete Euler-Lagrange equations are
$$\sum_\xi \frac{\partial L_d}{\partial x_{mn}}(\xi)=0$$
where $\xi$ runs over three points,
$$(x_{m\,n}, x_{m+1\,n},x_{m\,n+1})$$
$$(x_{m-1\,n}, x_{m\,n},x_{m-1\,n+1}),\, \hbox{left shift map}$$
$$(x_{m\,n-1}, x_{m+1\,n-1},x_{m\,n}),\, \hbox{right shift map}$$
and
$$m=1,...,M-1,\,\,n=1,...,N-1.$$

Since
$$\frac{\partial(x^i_{mn})_\alpha}{\partial x^k_{mn}}=-\delta^i_k,\,\,\frac{\partial(x^i_{m-1\,n})_\alpha}{\partial x^k_{mn}}=\delta ^i_k \delta ^1_\alpha ,\,\,\frac{\partial(x^i_{m\,n-1})_\alpha}{\partial x^k_{mn}}=\delta ^i_k \delta ^2_\alpha ,$$
explicitly, we compute
$$2\,\frac{\partial L_d}{\partial x^k_{mn}}(x_{m\,n}, x_{m+1\,n},x_{m\,n+1})
= \frac{1}{\sqrt{d}}\,\,\frac{\partial d}{\partial h_{\alpha\beta}}\,\,\frac{\partial h_{\alpha\beta}}{\partial x^k_{mn}}$$
$$= \sqrt{d}\,h^{\alpha\beta}\left(\frac{1}{3}\frac{\partial g_{ij}}{\partial x^k}(\xi_{mn})(x^i_{m\,n})_\alpha (x^j_{m\,n})_\beta
-g_{kj}\left((x^j_{m\,n})_\alpha +(x^j_{m\,n})_\beta\right)\right);$$

$$2\,\frac{\partial L_d}{\partial x^k_{mn}}(x_{m-1\,n}, x_{m\,n},x_{m-1\,n+1})$$
{\small $$= \sqrt{d}\,h^{\alpha\beta}\left(\frac{1}{3}\frac{\partial g_{ij}}{\partial x^k}(\xi_{m-1\,n})(x^i_{m-1\,n})_\alpha(x^j_{m-1\,n})_\beta
+g_{kj}\left((x^j_{m-1\,n})_\alpha \delta^1_\beta + \delta^1_\alpha (x^j_{m-1\,n})_\beta \right)\right);$$}

$$2\,\frac{\partial L_d}{\partial x^k_{mn}}(x_{m\,n-1}, x_{m+1\,n-1},x_{m\,n})$$
{\small $$= \sqrt{d}\,h^{\alpha\beta}\left(\frac{1}{3}\frac{\partial g_{ij}}{\partial x^k}(\xi_{m\,n-1})(x^i_{m\,n-1})_\alpha (x^j_{m\,n-1})_\beta
+g_{kj}\left((x^j_{m\,n-1})_\alpha \delta^2_\beta + \delta^2_\alpha (x^j_{m\,n-1})_\beta\right)\right).$$}

\begin{theorem}
Denoting
$$A_{mn\alpha\beta}= - g_{kj}\left((x^j_{m\,n})_\alpha +(x^j_{m\,n})_\beta\right)$$
$$A_{m-1\,n\alpha\beta}= g_{kj}\left((x^j_{m\,n})_\alpha \,\delta^1_\beta +\delta^1_\alpha\, (x^j_{m\,n})_\beta\right)$$
$$A_{m\,n-1\alpha\beta}= g_{kj}\left((x^j_{m\,n})_\alpha\, \delta^2_\beta + \delta^2_\alpha \,(x^j_{m\,n})_\beta \right),$$
the variational integrator of discrete minimal $2$-submanifolds is described by the recurrence equation
$$
\sum_{m,m-1,n-1}\,h^{\alpha\beta}(m,n)\left(\frac{1}{3}\frac{\partial g_{ij}}{\partial x^k}(\xi_{mn})(x^i_{m\,n})_\alpha (x^j_{m\,n})_\beta + A_{mn\alpha\beta}\right)=0.$$
\end{theorem}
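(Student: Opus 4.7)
The plan is to treat the theorem as a reorganization of the three partial-derivative computations already performed in the paragraphs immediately preceding its statement, packaged through the abbreviations $A_{mn\alpha\beta}$, $A_{m-1\,n\alpha\beta}$, $A_{m\,n-1\alpha\beta}$. No new geometric input is needed beyond the discrete Euler–Lagrange rule $\sum_\xi \partial L_d/\partial x_{mn}(\xi)=0$ and the three explicit expressions for $2\,\partial L_d/\partial x^k_{mn}$ evaluated at the three triangular stencils $(x_{m\,n},x_{m+1\,n},x_{m\,n+1})$, $(x_{m-1\,n},x_{m\,n},x_{m-1\,n+1})$, $(x_{m\,n-1},x_{m+1\,n-1},x_{m\,n})$.

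First, I would add the three displayed expressions for $2\,\partial L_d/\partial x^k_{mn}$ and apply the discrete Euler–Lagrange equation, yielding a single identity of the form
\begin{equation*}
\sum \sqrt{d}\,h^{\alpha\beta}\Bigl(\tfrac{1}{3}\tfrac{\partial g_{ij}}{\partial x^k}(\xi)\,(x^i_\cdot)_\alpha(x^j_\cdot)_\beta + G_{\alpha\beta}\Bigr)=0,
\end{equation*}
where the sum runs over the three stencils and $G_{\alpha\beta}$ denotes the $g_{kj}$-contribution coming from that particular stencil. Next, I would identify each $G_{\alpha\beta}$ with one of $A_{mn\alpha\beta}$, $A_{m-1\,n\alpha\beta}$, $A_{m\,n-1\alpha\beta}$: the $(x_{m\,n},x_{m+1\,n},x_{m\,n+1})$-stencil produces the $-g_{kj}\bigl((x^j_{mn})_\alpha+(x^j_{mn})_\beta\bigr)$ piece, which is $A_{mn\alpha\beta}$; the left-shift stencil produces the $+g_{kj}\bigl((x^j_{m-1\,n})_\alpha\delta^1_\beta+\delta^1_\alpha(x^j_{m-1\,n})_\beta\bigr)$ piece, matching $A_{m-1\,n\alpha\beta}$ after relabeling; the right-shift stencil produces the $\delta^2$-analogue matching $A_{m\,n-1\alpha\beta}$. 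Since $\sqrt{d}>0$ wherever the induced metric is nondegenerate, one can absorb the common factor and reach exactly the recurrence in the statement.

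The substantive work therefore reduces to \emph{index bookkeeping}: tracking the differentiation identities $\partial(x^i_{mn})_\alpha/\partial x^k_{mn}=-\delta^i_k$, $\partial(x^i_{m-1\,n})_\alpha/\partial x^k_{mn}=\delta^i_k\delta^1_\alpha$, $\partial(x^i_{m\,n-1})_\alpha/\partial x^k_{mn}=\delta^i_k\delta^2_\alpha$ (already used above to derive the three expressions), and verifying the consistency between the indexing conventions used in the three explicit formulas for $\partial L_d/\partial x^k_{mn}$ and the symbols $A_{m-1\,n\alpha\beta}$, $A_{m\,n-1\alpha\beta}$, whose displayed definitions are written in terms of the base-point difference $(x^j_{mn})_\alpha$. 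Reconciling this relabeling (interpreting $A_{\cdot,\cdot,\alpha,\beta}$ as the $g_{kj}$-block attached to the corresponding stencil rather than as quantities evaluated at the shifted base point) is the only delicate step; once this convention is fixed, the recurrence follows by assembling the three displays and dividing through.

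The main obstacle is thus not analytical but notational: pinning down the precise meaning of the symbol $\sum_{m,m-1,n-1}$ in the statement as summation over the three triangular stencils incident to $(m,n)$, and matching the $\xi$-arguments $\xi_{mn}$, $\xi_{m-1\,n}$, $\xi_{m\,n-1}$ of the partial $\partial g_{ij}/\partial x^k$ terms to the same stencil labels. Once this is done, the assertion is a consequence of the three displayed partial-derivative formulas together with the discrete Euler–Lagrange principle recalled earlier.
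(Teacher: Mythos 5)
Your proposal is correct and follows essentially the same route as the paper, which offers no separate proof: the theorem is simply the assembly of the three displayed expressions for $2\,\partial L_d/\partial x^k_{mn}$ under the discrete Euler--Lagrange rule $\sum_\xi \partial L_d/\partial x_{mn}(\xi)=0$, with the $g_{kj}$-blocks renamed $A_{mn\alpha\beta}$, $A_{m-1\,n\alpha\beta}$, $A_{m\,n-1\alpha\beta}$ and the common factor $\sqrt{d}$ divided out. You also correctly isolate the one genuine subtlety --- the relabeling of the shifted-stencil quantities $(x^j_{m-1\,n})_\alpha$, $(x^j_{m\,n-1})_\alpha$ as $(x^j_{mn})_\alpha$ in the definitions of the $A$-symbols and the meaning of $\sum_{m,m-1,n-1}$ --- which is exactly the bookkeeping the paper leaves implicit.
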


\section{Conclusions}
This paper presents original results regarding the multivariate recurrence equations.
Our approach to multivariate recurrence equations is advantageous for practical problems.
The original results have a great potential to solve problems in various areas such as
ecosystem dynamics, financial modeling, economics, image processing (representations of filters),
and differential geometry etc. The two-dimensional filters are extensively used
in processing two-dimensional sampled data (seismic data sections, digitized photographic
data, gravitational and magnetic maps etc).

\vspace{0.3cm}

{\bf Acknowledgements}

Partially, the work has been funded by the Sectoral Operational Programme Human Resources
Development 2007-2013 of the Ministry of European Funds through
the Financial Agreement POSDRU/159/1.5/S/132395.

Partially supported by University Politehnica of Bucharest and by Academy of Romanian Scientists.

Parts of this paper were presented at X-th International Conference
on Finsler Extensions of Relativity Theory (FERT 2014) August 18-24, 2014, Bra\c sov, Romania
and at The VIII-th International Conference ``Differential Geometry and Dynamical Systems"
(DGDS-2014), 1 - 4 September 2014, Mangalia, Romania.


\end{document}